\newtheorem{theorem}{Theorem}
\newtheorem{corollary}{Corollary}
\newtheorem{lemma}{Lemma}
\newtheorem{prob}{Problem}
\newtheorem{conj}{Conjecture}
\theoremstyle{definition}
\newtheorem{example}{Example}
\newcounter {own}
\def\theown {\thesection       .\arabic{own}}
\newenvironment{rem}{%
\bigskip
\noindent \textsl{{\sl Remark. }}}{\bigskip}
\newenvironment{rems}{%
\bigskip
\noindent \textsl{{\sl Remarks. }}}{\bigskip}
\newenvironment{pf}[1][]{%
 \vskip 3mm
 \noindent
 \ifthenelse{\equal{#1}{}}%
  {{\slshape Proof. }}%
  {{\slshape #1.} }%
 }%
{\qed\bigskip}
\newcounter{alphabet}
\newcounter{tmp}
\newenvironment{Thm}[1][]{\refstepcounter{alphabet}%
\bigskip%
\noindent%
{\bf Theorem \Alph{alphabet}}%
\ifthenelse{\equal{#1}{}}{}{ (#1)}%
{\bf .} \itshape}{\vskip 8pt}
\newcommand{\Ref}[1]{\@ifundefined{r@#1}{}{\setcounter{tmp}{\ref{#1}}\Alph{tmp}}}
\newcommand{\IC}{{\mathbb C}}
\newcommand{\ID}{{\mathbb D}}
\newcommand{\IT}{{\mathbb T}}
\newcommand{\real}{{\operatorname{Re}\,}}
\def\be{\begin{equation}}
\def\ee{\end{equation}}
\newcommand{\bee}{\begin{enumerate}}
\newcommand{\eee}{\end{enumerate}}
\newcommand{\blem}{\begin{lemma}}
\newcommand{\elem}{\end{lemma}}
\newcommand{\bthm}{\begin{theorem}
}
\newcommand{\ethm}{\end{theorem}
}
\newcommand{\bcor}{\begin{corollary}
}
\newcommand{\ecor}{\end{corollary}
}
\newcommand{\beg}{\begin{example}}
\newcommand{\eeg}{\end{example}}
\newcommand{\begs}{\begin{examples}}
\newcommand{\eegs}{\end{examples}}
\newcommand{\bdefe}{\begin{defin}}
\newcommand{\edefe}{\end{defin}}
\newcommand{\bprob}{\begin{prob}}
\newcommand{\eprob}{\end{prob}}
\newcommand{\bei}{\begin{itemize}}
\newcommand{\eei}{\end{itemize}}
\newcommand{\bcon}{\begin{conj}}
\newcommand{\econ}{\end{conj}}
\newcommand{\bcons}{\begin{conjs}}
\newcommand{\econs}{\end{conjs}}
\newcommand{\bprop}{\begin{propo}}
\newcommand{\eprop}{\end{propo}}
\newcommand{\br}{\begin{rem}}
\newcommand{\er}{\end{rem}}
\newcommand{\brs}{\begin{rems}}
\newcommand{\ers}{\end{rems}}
\newcommand{\bo}{\begin{obser}}
\newcommand{\eo}{\end{obser}}
\newcommand{\bos}{\begin{obsers}}
\newcommand{\eos}{\end{obsers}}
\newcommand{\bpf}{\begin{pf}}
\newcommand{\epf}{\end{pf}}
\newcommand{\ba}{\begin{array}}
\newcommand{\ea}{\end{array}}
\newcommand{\beq}{\begin{eqnarray}}
\newcommand{\beqq}{\begin{eqnarray*}}
\newcommand{\eeq}{\end{eqnarray}}
\newcommand{\eeqq}{\end{eqnarray*}}
\newcommand{\ra}{\rightarrow}
\newcommand{\ds}{\displaystyle}
\newcounter{minutes}\setcounter{minutes}{\time}
\newcounter{hours}\setcounter{hours}{\time}
\begin{document}
\bibliographystyle{amsplain}
\title[]{Dirichlet problem, Univalency and Schwarz Lemma for Biharmonic Mappings}

\thanks{
File:~\jobname .tex,
          printed: \number\day-\number\month-\number\year,
          \thehours.\ifnum\theminutes<10{0}\fi\theminutes}
\author[Z. Abdulhadi]{Zayid  Abdulhadi}
\address{Z. Abdulhadi, Department of Mathematics,
American University of Sharjah, UAE-26666.}
\email{zahadi@aus.edu}

\author[Y. Abu Muhanna]{Yusuf  Abu Muhanna}
\address{Y. Abu Muhanna, Department of Mathematics,
American University of Sharjah, UAE-26666.}
\email{ymuhanna@aus.edu}

\author[S. Ponnusamy]{Saminathan Ponnusamy $^\dagger $
}
\address{S. Ponnusamy, Stat-Math Unit,
Indian Statistical Institute (ISI), Chennai Centre,
110, Nelson Manickam Road,
Aminjikarai, Chennai, 600 029, India.}
\email{samy@isichennai.res.in, samy@iitm.ac.in}

\subjclass[2000]{Primary: 31A30, 31B30, 35B5; Secondary: 30C35, 30C45, 30C80 }
\keywords{Harmonic and biharmonic mappings, Dirichlet problem, univalent, convex, Green function, Laplacian, Schwarz Lemma. \\
$
^\dagger$ {\tt  The third author is currently on leave from IIT Madras}
}

\begin{abstract}
In this paper, we shall discuss the family of biharmonic mappings for which maximum principle holds. As a
consequence of our study,  we present Schwarz Lemma for the family of biharmonic
mappings. Also we discuss the univalency of certain class of biharmonic mappings.
\end{abstract}

\maketitle \pagestyle{myheadings}
\markboth{Z. Abdulhadi, Y. Abu Muhanna and S. Ponnusamy}{Dirichlet problem, Univalency and Schwarz Lemma for Biharmonic Mappings}

\section{Introduction}\label{AAP7-sec1}

Investigation of biharmonic mappings in the context of geometric function theory is started only recently. Indeed
several important properties of biharmonic mappings are obtained in
\cite{AbAb-08,AbAbKh-2004,AbAbKh-2005,AbAbKh-2006,AbAli-2013,ChPoWa-2009,ChHerm-2007}
and these mappings were also generalized by some others, see for example \cite{AmGaPo-2017,Li-2013} and the references therein.
In the point of view of applied mathematics, biharmonic mappings arise naturally in fluid dynamics and elasticity problems,
and have  important applications in engineering and biology (see \cite{HaBre-1965,Lab-1964}). From the point of view of differential geometry,
biharmonic mappings are closely related to the theory of Laguerre minimal surfaces. For details, we refer to
\cite{AbAli-2013,BobPink-1996,Blas-1924,Blas-1925,PetPot-1998,PotGroMit-2009}). Thus, various kinds of problems
for harmonic and biharmonic (and more generally, polyharmonic and polyanalytic) mappings have  widely been investigated. In this article,
we are mainly concerned with maximum principle and Schwarz lemma for biharmonic mappings. We need some preparations before we address
our main issues concerning biharmonic mappings.

A real-valued $C^2$-function $u$ is harmonic in an open set in $\IC$ if
$\bigtriangleup u=0$ there, and is subharmonic if $\bigtriangleup u\geq 0$, where
$$\bigtriangleup =4\frac{\partial ^{2}}{\partial z\partial {\overline{z}}}
=4\bigtriangleup_z:=\frac{\partial ^{2}}{\partial x^{2}}+\frac{\partial ^{2}}{\partial y^{2}}, \quad z=x+iy,
$$
denotes the Laplace operator. Then the classical maximum principle for subharmonic functions states
that if $D$ is a bounded domain in $\IC$, and $u$ is continuous on the closure of $D$, then
$$\left \{0\leq \bigtriangleup u(z) ~\mbox{on $D$}~\mbox{and}~ u(z)\leq 0 ~\mbox{on $\partial D$}\right \} \Longrightarrow
 u(z)\leq 0 ~\mbox{on $D$},
$$
with equality at some point in $D$ if and only if $u(z)=0$  on $D$.

In 1908, Jacques Hadamard suggested the possibility of a maximum principle for the bilaplacian $\bigtriangleup ^2$.
Recall that a four times continuously differentiable real-valued function $u$ on a domain $\Omega$ is biharmonic
if $\bigtriangleup ^2u=\bigtriangleup (\bigtriangleup u)=0$, and
sub-biharmonic if $\bigtriangleup ^2u\leq 0$ (one should think of $\bigtriangleup$ as a negative operator, which is the reason why the
inequality is reversed as compared with the definition of subharmonic functions).
Obviously, every harmonic function is biharmonic but not necessarily the converse.

A complex-valued $C^2$-function $f$ in a simply connected domain in $\mathbb{C}$ is harmonic
if $\bigtriangleup f=0$ there. It is almost obvious that the mapping $f$ has a canonical decomposition $f=h+\overline{g},$
where $h$ and $g$ are analytic (holomorphic) there. Similarly,
a four times continuously differentiable complex-valued function $F$ in a simply connected domain is biharmonic if $\bigtriangleup ^2F=0$ there.
It is easy to see that every biharmonic mapping $F$ has the representation
\be\label{AAP7-eq0}
F(z)=|z|^{2}A(z)+B(z),
\ee
where $A$ and $B$ are harmonic there.

Let $D$ be a circular disk in the plane domain $\Omega$ and $u$ is a $C^1$-smooth function on the closure of $D$.
Then a variant of maximum principle for the bilaplacian takes the following formulation:
if $D$ is a bounded domain in $\IC$, and $u$ is continuous on the closure of $D$, then
$$\left \{ \left .\bigtriangleup ^2 u\right |_{D} \leq 0,~ \left . u \right |_{\partial D}\leq 0,
~\mbox{and}~  \left . \frac{\partial u}{\partial n} \right |_{\partial D}\leq 0
\right \} \Longrightarrow \left . u \right |_{D}\leq 0,
$$
whereby the normal derivative is calculated in the interior direction. Moreover, unless $u(z)=0$ on $D$, we have
$u(z)<0$ on $D$.

The paper is organized as follows. In Section \ref{AAP7-sec2}, we consider maximum principle for biharmonic mappings. This has
led to investigate a biharmonic analog of Rad\'{o}-Kneser-Choquet Theorem (see Theorem \Ref{AAP7-TheoB} and Problem \ref{AAP-prob1})
and in support of this proposal, we present a set of examples. In Section \ref{AAP7-sec3}, we prove a version of biharmonic Schwarz lemma.

In order to  motivate our investigation, it is more appropriate to express the above maximum principle in terms of the biharmonic Green
function on $D$ and this requires some preparation. This will be done in Section \ref{AAP7-sec2}.
Actually, Hadamard suggested that the maximum principle for $\bigtriangleup ^2$
should be valid for general domains than just disks, including all convex domains with smooth boundary. However, it was shown that this
was not the case. In 1951, Garabedian \cite{Gar-1951} has shown that this fails when $D$ is an ellipse, provided that the ratio
of the major axis to minor axis exceeds a certain critical value $\epsilon \approx 1.5933$ and later this value has been improved
to $\epsilon \approx 1.1713$. This observation shows that within the family
of ellipses, we cannot deviate too far from circles and keep the maximum principle for $\bigtriangleup ^2$ valid.
Thus, circular disks are somehow natural for the bilaplacian which is also corroborated in terms of the
work of Loewner \cite{Loe-1953}.


\section{Drichlet problem  for biharmonic mappings}\label{AAP7-sec2}

\subsection{Preliminaries and basic tools}
We consider biharmonic mappings defined on the unit disk $\ID:=\{z\in\IC:\, |z|<1\}$. We denote by $\IT$, the unit circle $\{z\in\IC:\, |z|=1\}$
and $\overline{\ID}:=\ID\cup\IT$, the closed unit disk. Throughout this paper, it is more convenient to consider biharmonic mappings of the form
\be\label{AAP7-eq1}
F(z)=H(z)+(1-|z|^2)h(z),
\ee
where $H$ and $h$ are harmonic in $\ID$. In this article, we study the space of solutions
of the Dirichlet problem:
\be\label{AAP7-eq1a}
\left \{\begin{array}{l} \mbox{Solve the biharmonic equation}~ \bigtriangleup ^{2}u=0 ~\mbox{ on $\ID$}\\
\mbox{subject to the conditions}\\
\ds u=\varphi \mbox{ on $\IT$} ~\mbox{ and }~
\frac{\partial u}{\partial n}=\psi \mbox{ on $\IT$},
\end{array}\right.
\ee
where the normal derivative is taken in the exterior direction.
It is known that every solution of this problem is of the form \eqref{AAP7-eq1} and thus, it is biharmonic in $\ID$.
Moreover, the biharmonic Green function  for the operator $\bigtriangleup ^2 $ in the unit disk $\ID$ (with Dirichlet boundary conditions)
is the function $\Gamma (z,\zeta )$ defined by the expression
$$\Gamma (z,\zeta )=|z-\zeta |^{2}G(z,\zeta )
+\left( 1-|z|^{2}\right) \left( 1-|\zeta |^{2}\right),\quad  (z,\zeta)\in \ID\times \ID,
$$
where
$$G(z,\zeta )=\log \left\vert \frac{z-\zeta }{1-z\overline{\zeta }}\right\vert ^{2},\quad  (z,\zeta)\in \ID\times \ID,
$$
stands for the usual Green function for the Laplacian $\bigtriangleup $ in the unit disk $\ID$.
A calculation shows that $\Gamma (z,\zeta )>0$ on the bidisk $\ID\times \ID$ (see for instance
\cite[Proposition 2.3]{AlRi-1996}).  For a fixed $\zeta\in\ID$, the biharmonic Green function $\Gamma (.\,,\zeta )$ solves the
following boundary value problem:
$$\left \{\begin{array}{l}
 \bigtriangleup ^{2}_z \Gamma (z,\zeta )=\delta_\zeta (z) ~\mbox{ for $z\in \ID$,}\\
\ds \Gamma (z,\zeta )= 0 \mbox{ on $\IT$,} \\
\ds \partial_{n(z)} \Gamma (z,\zeta )=0 \mbox{ on $\IT$,}
\end{array}\right.
$$
where $\partial_{n(z)}=\frac{\partial   }{\partial n(z)}$ denotes the inward normal derivative being taken with respect
to the boundary variable $z\in\IT$  in the interior direction in the sense of distributions.
Then any $u(z)\in C^{4},$ the solution of the above Dirichlet problem can be  captured from the boundary: For $z\in\ID$,
\begin{eqnarray}\label{AAP7-eq2}
u(z) &=&\int_{\ID} \Gamma (\zeta,z )\bigtriangleup ^{2}u(\zeta )\,dA(\zeta )   \\
\nonumber
&& +\frac{1}{2} \int_{\IT}\left[ \bigtriangleup _{\zeta }\Gamma (\zeta ,z )
\frac{\partial u}{\partial n(\zeta )}(\zeta ) \,d\sigma (\zeta )
-\frac{\partial}{\partial n(\zeta )}\bigtriangleup _{\zeta }\Gamma (\zeta,z)  u(\zeta ) \right]d\sigma (\zeta ) ,
\end{eqnarray}
where $dA(\zeta ) = (1/\pi)dx dy$ denotes the normalized Lebesgue area measure on the unit disk $\ID$,
and, for $z = e^{i\theta}$, we write $d\sigma (\zeta ) = (1/2\pi)d\theta$ for the normalized arc length measure on the unit circle $\IT$.

A computation gives
$$ \bigtriangleup _{\zeta}\Gamma (\zeta ,z )= G(\zeta ,z ) + H(\zeta ,z)~\mbox{ for }~ (\zeta,z)\in \ID \times \ID ,
$$
where $H(\zeta ,z )$ is the harmonic compensator defined by
$$H(\zeta ,z )=(1-|z|^2)P(z,\zeta) =\frac{(1-|z|^2)^2}{|1-\overline{z}\zeta|^2},
\quad  (\zeta,z)\in \IT\times \ID
$$
in which $P(z,\zeta)$ stands for the Poisson kernel for the unit disk. The function $H(\zeta ,z )$
is harmonic in its first argument and is biharmonic in its second argument. Clearly it is not symmetric in its arguments.
Another computation gives that the function
$$F(\zeta ,z )=-\frac{1}{2}\partial_{n(\zeta)} \bigtriangleup _{\zeta }\Gamma (\zeta,z),  \quad  (\zeta,z)\in \IT\times \ID ,
$$
has the form
$$F(\zeta ,z )=\frac{1}{2} \frac{(1-|z|^2)^2}{|1-\overline{z}\zeta|^2} +\frac{1}{2}\frac{(1-|z|^2)^3}{|1-\overline{z}\zeta|^4}  ,
\quad  (\zeta,z)\in \IT\times \ID.
$$
It turns that $F(\zeta ,z )$ is biharmonic in its second argument and is a certain biharmonic Poisson kernel studied by
Abkar and  Hedenmalm \cite{AH-2001}. We refer to \cite{DuSch-2004,HedelKoZhu-2000} for a general reference on this topic
where one can also obtain primary connection between the Green function for $\bigtriangleup ^{2}$ with Dirichlet boundary conditions
and the Bergmann spaces.

The formula \eqref{AAP7-eq2} implies the following well-known form of the maximum principle for ready reference.

\begin{Thm}\label{AAP7-TheoA}
If $u(z)\in C^{4}$ on $\overline{\ID}$ is real and subject to the
conditions:
$$\left \{\left. \bigtriangleup ^{2}u\right |_{D} \leq 0, ~\left. u\right |_{\IT}\leq 0, ~
\left. \frac{\partial u}{\partial n} \right |_{\IT}\leq 0 \right \},
$$
then $u\leq 0$ on $\ID.$
\end{Thm}

The proof of this theorem is apparent from \eqref{AAP7-eq1a}, \eqref{AAP7-eq2} and the fact that
$$\left \{\begin{array}{l}
\ds \Gamma (z,\zeta )>0~\mbox{ for $(z,\zeta )\in \ID\times \ID$},~ \\
\ds \bigtriangleup _{z}\Gamma (z,\zeta )>0
~\mbox{ for $(z,\zeta )\in \IT\times \ID$},\\ ~
\ds \frac{\partial  }{\partial n(z)} \bigtriangleup _{z}\Gamma (z,\zeta )<0~\mbox{ for $(z,\zeta )\in \IT\times \ID$}.
\end{array}
\right .
$$

\subsection{Dirichlet problem and the Univalency of Biharmonic mappings}

\begin{lemma}\label{AAP7-lem1}
If the solution $u(z)$ of the Dirichlet problem \eqref{AAP7-eq1a} belongs to $C^{4}(\overline{\ID})$ and is of the
form \eqref{AAP7-eq1}, i.e., $u(z)=H(z)+(1-|z|^{2})h(z)$,
then $H=\varphi $ on $\IT$ and $h=\frac{1}{2}(\psi +\frac{\partial H}{\partial n})$ on $\IT.$
\end{lemma}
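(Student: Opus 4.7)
The argument is a direct evaluation of both boundary conditions on $\IT$, exploiting the fact that the weight $1-|z|^{2}$ vanishes identically on the unit circle.

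First I would read off $H$ from the Dirichlet datum $\varphi$. On $\IT$ we have $|z|=1$, so $1-|z|^{2}=0$, and the representation $u=H+(1-|z|^{2})h$ from \eqref{AAP7-eq1} collapses to $u(z)=H(z)$ there. Comparing with the prescribed boundary condition $u=\varphi$ on $\IT$ from \eqref{AAP7-eq1a} yields immediately that $H=\varphi$ on $\IT$.

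To recover $h$ from the Neumann datum $\psi$, I would differentiate $u=H+(1-|z|^{2})h$ in the normal direction and apply the product rule:
$$\frac{\partial u}{\partial n}(z)=\frac{\partial H}{\partial n}(z)+\frac{\partial(1-|z|^{2})}{\partial n}\,h(z)+(1-|z|^{2})\frac{\partial h}{\partial n}(z).$$
On $\IT$ the last term drops out because of the prefactor $1-|z|^{2}$. The middle coefficient is easily computed in polar coordinates $z=re^{i\theta}$: since $1-r^{2}$ depends only on $r$, its normal derivative on the unit circle equals $\pm 2$ depending on whether the normal is taken inward or outward. Imposing $\partial u/\partial n=\psi$ on $\IT$ and solving the resulting linear relation for $h$ produces the stated expression.

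The proof is essentially a two-line calculation; the only point that requires any care is to maintain a consistent sign convention for the normal derivative across the three quantities $\psi$, $\partial H/\partial n$ and $\partial(1-|z|^{2})/\partial n$, so that the coefficient in front of $h$ comes out with the right sign and the formula for $h$ matches the form asserted in the lemma.
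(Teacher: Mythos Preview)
Your proposal is correct and matches the paper's own proof essentially line for line: the paper simply records that $\frac{\partial u}{\partial n}(z)=-2h(z)+\frac{\partial H}{\partial n}(z)$ on $\IT$ and then reads off both boundary relations from \eqref{AAP7-eq1a}. Your added remark about keeping the sign convention for the normal consistent is well taken, since the coefficient $-2$ in front of $h$ depends on the outward choice fixed in \eqref{AAP7-eq1a}.
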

\begin{proof}
It is easy to see that
$$\frac{\partial u}{\partial n}(z)=-2h(z)+\frac{\partial H}{\partial n}(z) \mbox{ on $\IT$}.
$$
By \eqref{AAP7-eq1a}, we have $H=\varphi $ on $\IT$ and $h=\frac{1}{2}(\psi +\frac{\partial H}{\partial n})$ on $\IT.$
\end{proof}

From now onwards, we choose $\psi =0$. Thus, we get the following family which contains solutions of \eqref{AAP7-eq1a}:
$${\mathcal F} =\left\{u:\, u(z)=H(z)+(1-|z|^{2})\frac{1}{2}\frac{r\partial H}{\partial n}(z),  ~\mbox{$H$ is harmonic in $\ID$}
\right\}.
$$

\begin{lemma}\label{AAP7-lem2}
If $u\in {\mathcal F} \cap C^{1}(\overline{\ID})$, then $u$ is a
solution of  \eqref{AAP7-eq1a} with $\varphi (z)=H(z)$ on $\IT$, $\psi =0$ and $h=\frac{1}{2}\frac{%
r\partial H}{\partial n}$ on $\IT.$
\end{lemma}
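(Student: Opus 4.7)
The plan is to check three properties of an arbitrary $u\in{\mathcal F}\cap C^{1}(\overline{\ID})$: biharmonicity on $\ID$, the identification $u=H$ on $\IT$, and the vanishing of $\partial u/\partial n$ on $\IT$. By definition of ${\mathcal F}$, we have $u=H+(1-|z|^{2})h$ with $H$ harmonic in $\ID$ and $h(z)=\tfrac{1}{2}\,r\,\partial H/\partial n$, interpreted on the whole of $\ID$ as $h(z)=\tfrac{1}{2}\,|z|\,\partial H/\partial r$.

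Before invoking the biharmonic representation \eqref{AAP7-eq1}, I need to know that the function $h$ is itself harmonic in $\ID$. Using the canonical decomposition $H=A+\overline{B}$ with $A,B$ analytic on $\ID$, a short calculation in polar form gives
\[
r\,\frac{\partial H}{\partial r}(z)=zA'(z)+\overline{zB'(z)},
\]
which is the sum of an analytic and an anti-analytic function, hence harmonic. Consequently $u=H+(1-|z|^{2})h$ is of the form \eqref{AAP7-eq1}, so $u$ is biharmonic on $\ID$, i.e.\ $\bigtriangleup^{2}u=0$ there.

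For the boundary data I would argue as follows. Since $(1-|z|^{2})\equiv 0$ on $\IT$, the representation collapses to $u|_{\IT}=H|_{\IT}$, so the Dirichlet boundary datum is $\varphi=H|_{\IT}$. For the normal derivative, I would reuse the product-rule computation already carried out in the proof of Lemma \ref{AAP7-lem1}, namely
\[
\frac{\partial u}{\partial n}(z)=-2h(z)+\frac{\partial H}{\partial n}(z)\quad\text{on }\IT,
\]
which follows from $\partial(1-|z|^{2})/\partial n=-2$ on $\IT$ together with $(1-|z|^{2})|_{\IT}=0$. On $\IT$ the factor $r$ equals $1$, so $h|_{\IT}=\tfrac{1}{2}\,\partial H/\partial n$, and the right-hand side is identically zero; thus $\partial u/\partial n=0=\psi$ on $\IT$, as required.

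The only step with any substance, and hence the main potential obstacle, is the verification that the radial expression $h=\tfrac{1}{2}\,r\,\partial H/\partial n$ truly defines a \emph{harmonic} function on all of $\ID$; without this, one cannot immediately quote \eqref{AAP7-eq1} to conclude biharmonicity of $u$. Once the identity $r\,\partial H/\partial r=zA'(z)+\overline{zB'(z)}$ is in hand, both boundary conditions reduce to the trivial evaluations $(1-|z|^{2})|_{\IT}=0$ and $r|_{\IT}=1$ combined with the formula from Lemma \ref{AAP7-lem1}.
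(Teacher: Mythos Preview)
Your proof is correct and takes a more direct route than the paper's. The paper argues via uniqueness: the Dirichlet problem \eqref{AAP7-eq1a} with data $\varphi=H|_{\IT}$ and $\psi=0$ has a unique solution, and Lemma~\ref{AAP7-lem1} forces that solution to have exactly the form of $u$, so $u$ must be it. You instead verify the three requirements of \eqref{AAP7-eq1a} by hand: you show $h=\tfrac12\, r\,\partial H/\partial r$ is harmonic via the identity $r\,\partial H/\partial r=zA'(z)+\overline{zB'(z)}$, deduce biharmonicity of $u$ from the representation \eqref{AAP7-eq1}, and then check both boundary conditions using $(1-|z|^2)|_{\IT}=0$ and the normal-derivative formula from the proof of Lemma~\ref{AAP7-lem1}. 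Your approach is more explicit and self-contained---in particular it makes the harmonicity of $h$ visible, a point the paper's two-line argument leaves implicit---while the paper's version is shorter because it outsources the work to the existence/uniqueness theory for the biharmonic Dirichlet problem together with Lemma~\ref{AAP7-lem1}.
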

\begin{proof}
The Dirichlet problem \eqref{AAP7-eq1a} has a unique solution with the given boundary conditions.
Then Lemma \ref{AAP7-lem1} implies the desired result.
\end{proof}

Following is a maximum principle for functions in the family ${\mathcal F}.$

\begin{lemma}\label{AAP7-lem3}
Let $u\in {\mathcal F}\cap C^{1}(\overline{\ID})$ and real. If $u\leq M$ on $\IT$, then $u\leq M$ on $\ID.$
\end{lemma}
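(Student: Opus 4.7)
The plan is to reduce the statement to Theorem A (the classical maximum principle for $\bigtriangleup^2$ on the disk) applied to the shifted function $v(z) := u(z) - M$. Since $M$ is constant, $v$ retains the biharmonicity of $u$ (which holds by Lemma \ref{AAP7-lem2}), so $\bigtriangleup^2 v = 0 \leq 0$ on $\ID$, and $v \leq 0$ on $\IT$ by hypothesis. The only remaining hypothesis of Theorem A to verify is $\partial v/\partial n \leq 0$ on $\IT$ (with the interior direction convention of Theorem A). We will in fact show the stronger statement $\partial u/\partial n \equiv 0$ on $\IT$ for every $u\in\mathcal{F}$, which is the one place where the specific structure of $\mathcal{F}$ is used.

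For the normal-derivative computation, repeat the observation made in the proof of Lemma \ref{AAP7-lem1}: since $(1-|z|^2)$ vanishes on $\IT$ and $\partial_r(1-r^2)\big|_{r=1}=-2$, the outward normal derivative of $u=H+(1-|z|^2)h$ on $\IT$ reduces to
$$\frac{\partial u}{\partial n}(z) = \frac{\partial H}{\partial n}(z) - 2h(z), \quad z\in\IT.$$
For $u\in\mathcal{F}$ we have $h(z) = \tfrac{1}{2}\, r\,\partial H/\partial n$ interpreted as a harmonic function on $\ID$, and at $|z|=1$ this specializes to $h(z) = \tfrac{1}{2}\partial H/\partial n$. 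Substituting gives $\partial u/\partial n = \partial H/\partial n - \partial H/\partial n = 0$ on $\IT$. Hence $\partial v/\partial n = 0$ there, regardless of the orientation of the normal. Theorem A now yields $v\leq 0$ on $\ID$, i.e., $u\leq M$ on $\ID$.

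The main (and really only) technical obstacle is the regularity gap between the hypothesis $u\in C^1(\overline{\ID})$ and the $C^4(\overline{\ID})$ assumption in Theorem A. Two routes resolve this: either one observes that the Green-function representation \eqref{AAP7-eq2} that drives Theorem A actually only requires $u$ to be biharmonic in $\ID$ together with $C^1$ boundary values (since $u$ is real-analytic in $\ID$, the interior integral of $\bigtriangleup^2 u$ vanishes identically), or one applies Theorem A to the dilates $u_\rho(z):=u(\rho z)$ for $\rho<1$, each of which is $C^\infty(\overline{\ID})$ and again biharmonic, and then lets $\rho\to 1^-$ using the uniform continuity of $u$ on $\overline{\ID}$. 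This is a standard approximation and aside from it, the proof is purely computational.
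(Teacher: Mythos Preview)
Your proof is correct and follows essentially the same route as the paper's two-line argument: use Lemma~\ref{AAP7-lem2} (or, as you do, the direct normal-derivative computation behind it) to obtain $\partial u/\partial n \equiv 0$ on $\IT$, then apply Theorem~\Ref{AAP7-TheoA} to $u-M$. Your discussion of the $C^1$-versus-$C^4$ regularity gap (via the Green representation or via dilation) is an improvement in rigor that the paper simply glosses over.
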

\begin{proof}
Lemma \ref{AAP7-lem2} implies that $\psi =0$, $\frac{\partial u}{\partial n}(z)=0$ and
thus, Theorem \Ref{AAP7-TheoA} implies the required conclusion.
\end{proof}

\begin{corollary}\label{AAP7-cor1}
{\rm (Maximum principle)} If $u\in {\mathcal F}\cap C^{1}(\overline{\ID})$ and $u:\,\IT\rightarrow \IT$ is complex-valued, then
$u$ maps $\ID$ into $\ID.$
\end{corollary}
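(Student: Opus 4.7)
The plan is to reduce this complex-valued statement to the real-valued maximum principle of Lemma \ref{AAP7-lem3} by rotating $u$ and taking real parts. The key preliminary observation is that the family $\mathcal{F}$ is stable under the operation $u \mapsto \real(e^{-i\alpha}u)$ for any real $\alpha$: if $u = H + (1-|z|^2)\frac{1}{2}\frac{r\partial H}{\partial n}$ with $H$ harmonic, then $e^{-i\alpha}u$ is generated by the harmonic function $e^{-i\alpha}H$, and because $\frac{\partial}{\partial n}$ is a real differential operator it commutes with $\real$. Hence
$$
\real(e^{-i\alpha}u) \;=\; \real(e^{-i\alpha}H) + (1-|z|^{2})\tfrac{1}{2}\tfrac{r\partial \real(e^{-i\alpha}H)}{\partial n}
$$
is an element of $\mathcal{F}\cap C^{1}(\overline{\ID})$ that is moreover real-valued.

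For each $\alpha\in[0,2\pi)$, set $v_\alpha := \real(e^{-i\alpha}u)$. The hypothesis $u(\IT)\subseteq \IT$ forces $|u(\zeta)|=1$ on $\IT$, so $v_\alpha(\zeta)\le 1$ on $\IT$; Lemma \ref{AAP7-lem3} then gives $v_\alpha\le 1$ on $\ID$. Fix $z\in\ID$ and choose $\alpha$ with $e^{-i\alpha}u(z)=|u(z)|$; this produces $|u(z)| = v_\alpha(z)\le 1$, proving $u(\ID)\subseteq \overline{\ID}$.

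To rule out boundary hits I would invoke the strict form of Theorem \Ref{AAP7-TheoA} recalled in the introduction. If $|u(z_0)|=1$ at some $z_0\in\ID$ with the corresponding angle $\alpha_0$, then the function $v_{\alpha_0}-1$ still lies in $\mathcal{F}$ (adding a constant to $H$ does not alter the normal-derivative term), is $\le 0$ on $\IT$, has vanishing inward normal derivative on $\IT$ by Lemma \ref{AAP7-lem2}, and equals $0$ at $z_0$. The strict principle forces $v_{\alpha_0}\equiv 1$; combined with $|u|\equiv 1$ on $\IT$ this yields $u\equiv e^{i\alpha_0}$ on $\IT$ and, by uniqueness of the Dirichlet solution, on $\ID$ as well. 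Thus the only way $u(\ID)$ can touch $\IT$ is the trivial case of a unimodular constant, which we regard as a degenerate exception.

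The main obstacle is really just the closure verification in the first paragraph: one has to notice that the extended operator $H \mapsto \frac{r\partial H}{\partial n}$ is $\IR$-linear, so that $\real$ and this operator commute and the real part of a function in $\mathcal{F}$ stays in $\mathcal{F}$. After that, the argument is a textbook reduction via the identity $|w|=\sup_{\alpha}\real(e^{-i\alpha}w)$ and a single application of the real maximum principle already established.
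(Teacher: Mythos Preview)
Your argument is correct and follows exactly the same route as the paper: rotate $u$ by a unimodular constant, take the real part, observe this stays in $\mathcal{F}$, and apply Lemma~\ref{AAP7-lem3}. You supply more detail than the paper does---in particular the explicit closure check for $\mathcal{F}$ under $u\mapsto\real(e^{-i\alpha}u)$ and the appeal to the strict form of Theorem~\Ref{AAP7-TheoA} to exclude boundary hits---but the underlying idea is identical.
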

\begin{proof}
Let $v(z)=\real (\lambda u(z))$, where $|\lambda |=1.$ Then $v\in {\mathcal F} $,
and $v(z)<1$ for all $z\in\ID$ for each $\lambda\in \IT$. Fix $z$ and
choose $\lambda$ so that $v(z)= |u(z)|$ and, thus, $v(z)\leq |u(z)|\leq 1$ on $\IT.$ Finally,
Lemma \ref{AAP7-lem3} gives the desired result.
\end{proof}

We say that a harmonic mapping $f=h+\overline{g}$ is locally univalent and sense preserving in a simply connected domain $\Omega$
if and only if its Jacobian $J_{f} (z)$ is positive there, where
$ J_{f} (z)=|f_{z}(z)|^{2}-|f_{\overline{z}}(z)|^{2}=|h'(z)|^{2}-|g'(z)|^{2}.
$
According to Lewy's theorem \cite{Le-1936}, $f$ is locally univalent and sense-preserving in $\Omega$ if and only if $
|g'(z)|<|h'(z)|$ in $\Omega$. See \cite{Clunie-Small-84, Du-2004,SaRa2013} for a detailed information on harmonic mappings and its important
geometric subfamilies.


\begin{Thm}\label{AAP7-TheoB}
{\rm (Rad\'{o}-Kneser-Choquet Theorem)}  Let $f^*$ be a homeomorphism
from $\IT$ onto $\partial \Omega,$ where $\Omega $ is a bounded convex domain.
Then its harmonic extension
$$ f(z) = \frac{1}{2\pi}\int_0^{2\pi} \frac{1-|z|^2}{|1-e^{-i\theta}z|^2}f^*(e^{i\theta})\, d\theta
$$
is univalent in $\ID$ and defines a harmonic mapping of $\ID$ onto $\Omega$.
\end{Thm}

Let us now suppose that $H(z)=w_1(z)+\overline{w_2(z)},$  where both $w_{1}$ and $w_{2}$ are analytic in $\ID$.
A natural question is to ask whether Rad\'{o}-Kneser-Choquet Theorem continues to hold for ${\mathcal F}$.

\bprob\label{AAP-prob1}
Suppose that $H:\,\IT\rightarrow \IT$ is bijective, and $H(z)=w_1(z)+\overline{w_2(z)},$  where both $w_{1}$ and $w_{2}$ are
analytic in $\ID$. If
$$u(z)=H(z)+(1-|z|^{2})\frac{1}{2}(zw_{1}'(z)+\overline{zw_{2}'(z)})
$$
is the biharmonic extension to $\ID$, is $u$ univalent on $\ID$?
\eprob

Let us continue the discussion with a couple of examples to motive this problem in a slightly general format.
Our first example deals with the case where $H(z)=z$.

\begin{example}
Consider $u(z)=z+0.5(1-|z|^{2})z.$ Then it is easy to see that it is univalent in $\ID.$ Indeed for $z_1,z_2\in\ID$, $u(z_1)=u(z_2)$ gives
$$z_1(3-|z_1|^2)=z_2(3-|z_2|^2), ~\mbox{ i.e. }~\frac{z_1}{z_2}=\frac{3-|z_2|^2}{3-|z_1|^2},
$$
since there is nothing to prove if $z_1=0$ or $z_2=0$. So we may assume that $z_1,z_2\in\ID\backslash\{0\}$.
Then the last relation obviously shows that $r=z_1/z_2$ is real and positive. Again, without loss of generality, we can assume that $r\in (0,1]$ so that $z_1=rz_2$ and thus, the last relation reduces to
$$r=\frac{3-|z_2|^2}{3-r^2|z_2|^2},~\mbox{ i.e. }~(1-r)[3-|z_2|^2(1+r+r^2)]=0,
$$
which clearly gives that $r=1$ and thus, $z_1=z_2$. This proves the univalency of $u(z)$ in $\ID$.
More generally, it is easy to see that for each $\alpha \in (0,1/2]$, the biharmonic mapping $u_\alpha (z)=z +\alpha (1-|z|^2)z$ is univalent in $\ID$.
%
\end{example}

In order to provide a proof of the next example, we need the following reformulated version of \cite[Theorem~1.1]{AmGaPo-2017}.

\begin{Thm} \label{AAP-Theo0}
Let $u(z)= |z|^2F_1(z)+F_2(z)$ be biharmonic in $\mathbb{D}$ and univalent in a neighborhood of the origin, where
$$F_{j}(z)=\sum_{n=1}^{\infty} \left (a_n^{(j)} z\thinspace^n+b_n^{j}{\overline{z}}\thinspace^n\right ) ~\mbox{ ($j=1,2$)}
$$
are harmonic in $\mathbb{D}.$ Then the function $u(z)$ is univalent in $\mathbb{D}$ if and only if for each
$z\in\mathbb{D}\backslash\{0\}$ and $t\in\left(0,\pi/2\right]$ the following condition holds:
\begin{equation} \label{eq0}
 \sum\limits_{n=1}^{\infty}\left(a_n^{(2)} z^n - b_n^{(2)}{\overline{z}}^n\right)
\frac{\sin nt}{\sin t} + |z|^2\, \sum\limits_{n=1}^{\infty}\left(a_n^{(1)} z^n - b_n^{(1)}{\overline{z}}^n\right)
\frac{\sin nt}{\sin t}
\neq0.
\end{equation}
\end{Thm}

\begin{figure}
\begin{center}
\includegraphics[height=5.5cm, width=5.5cm, scale=1]{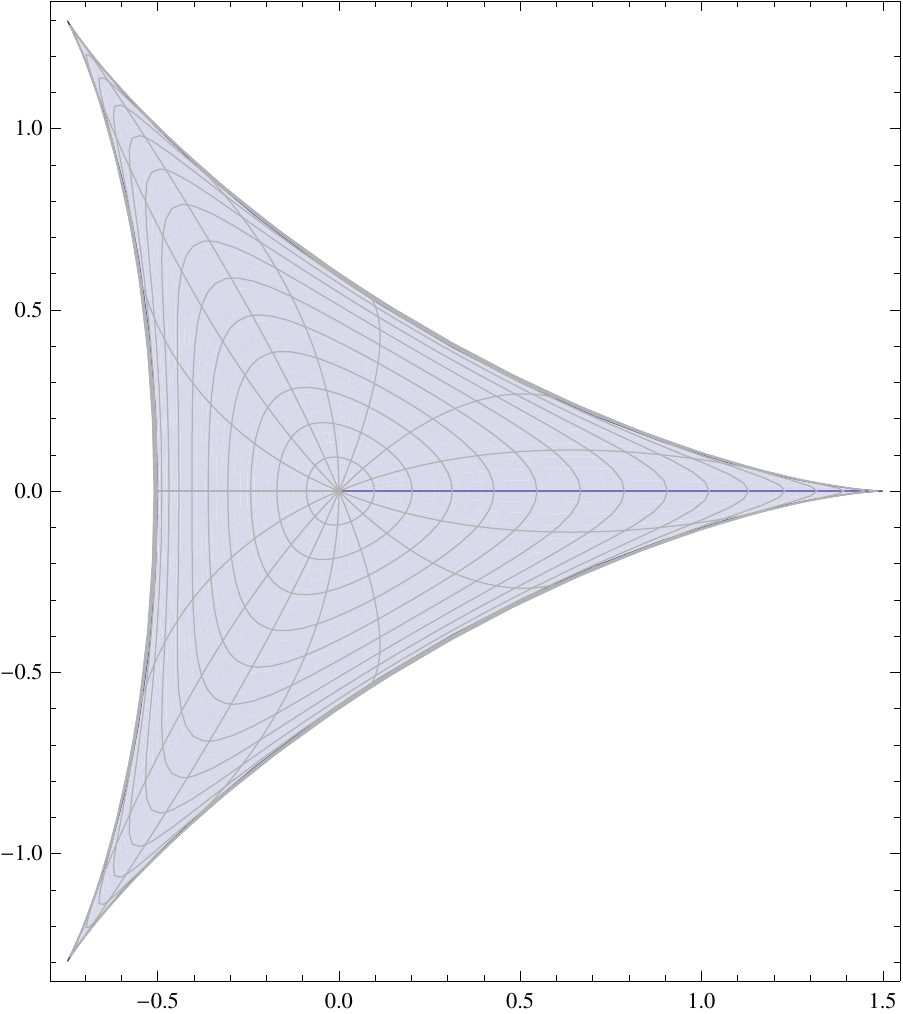}
\hspace{1cm}
\includegraphics[height=5.5cm, width=5.5cm, scale=1]{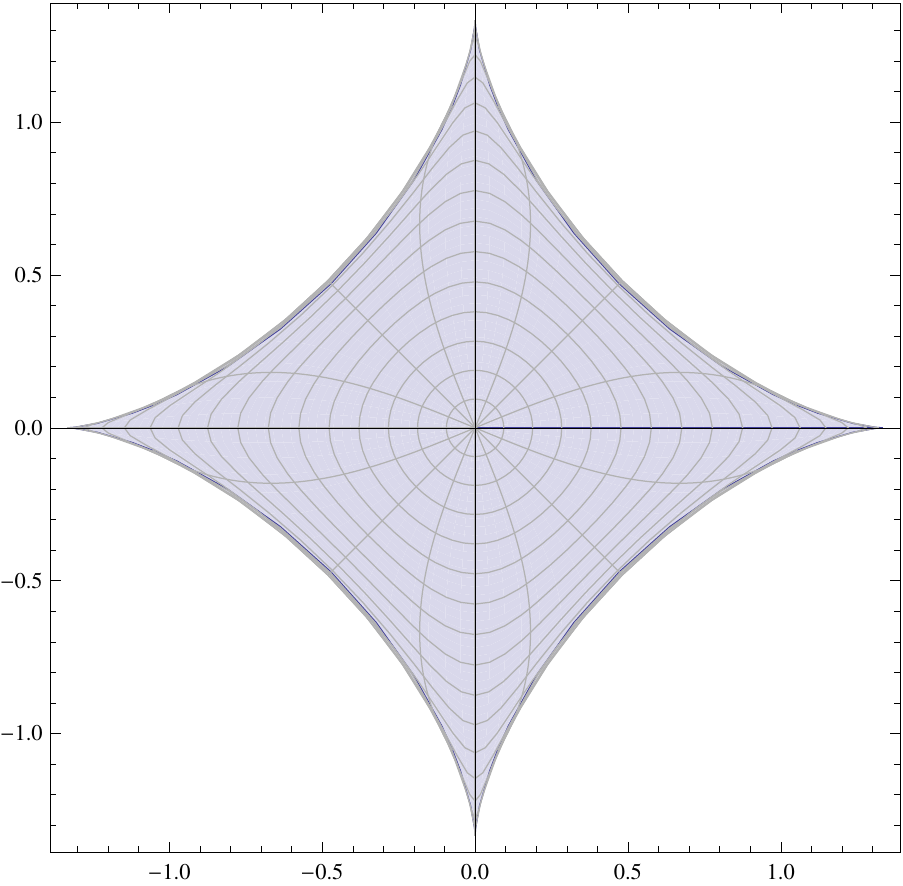}
\end{center}
$n=2$ \hspace{5cm} $n=3$

\vspace{0.25cm}

\begin{center}
\includegraphics[height=5.5cm, width=5.5cm, scale=1]{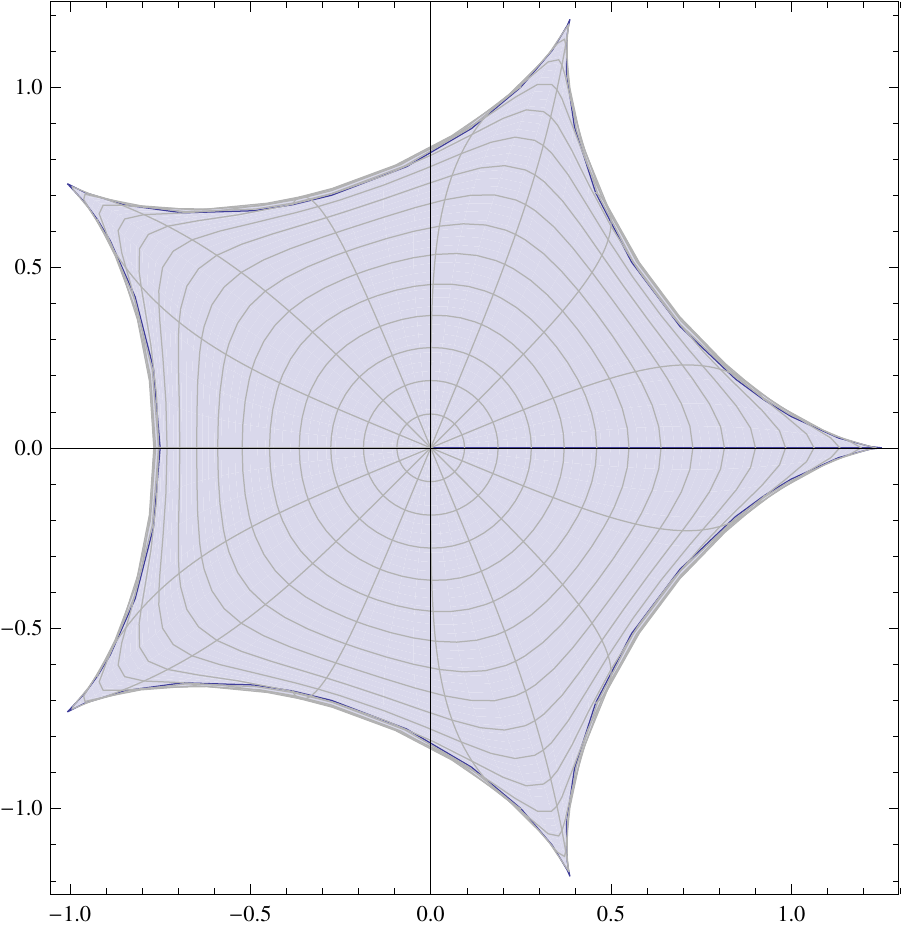}
\hspace{1cm}
\includegraphics[height=5.5cm, width=5.5cm, scale=1]{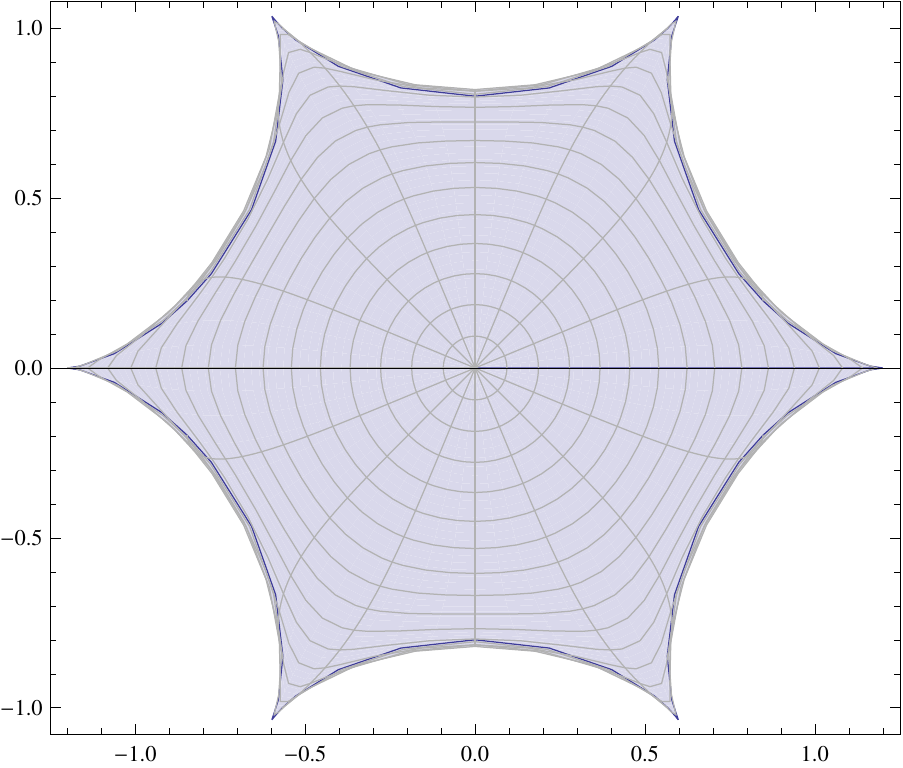}
\end{center}
$n=4$ \hspace{5cm} $n=5$

\vspace{0.25cm}

\begin{center}
\includegraphics[height=5.5cm, width=5.5cm, scale=1]{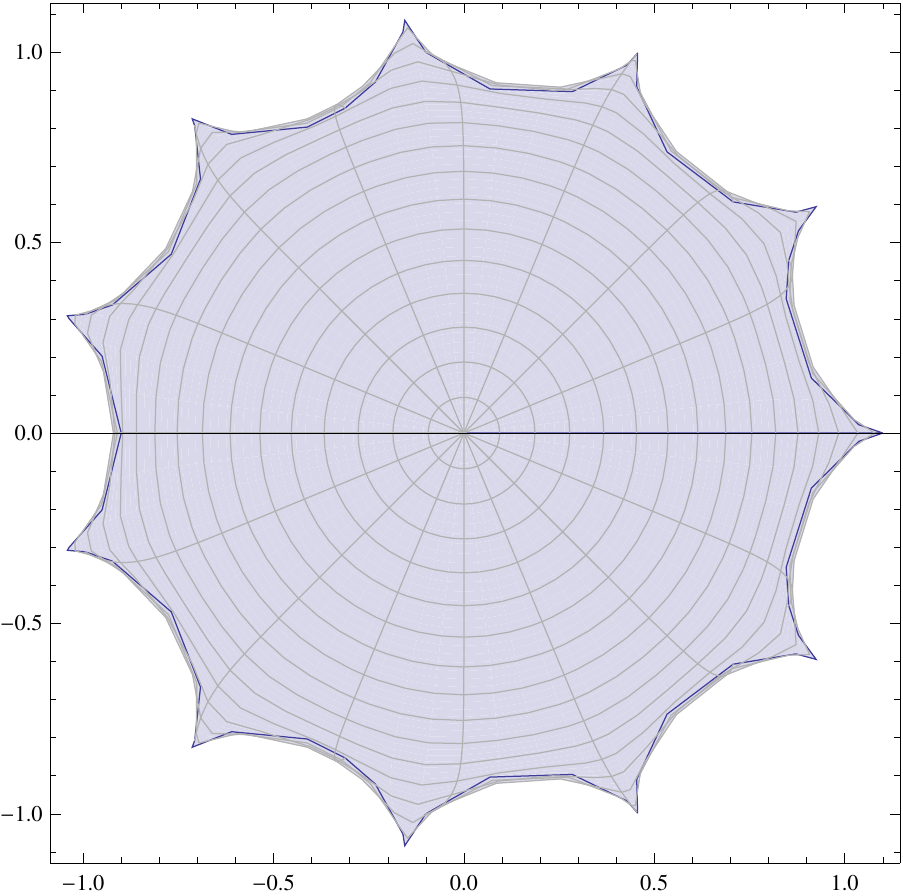}
\hspace{1cm}
\includegraphics[height=5.5cm, width=5.5cm, scale=1]{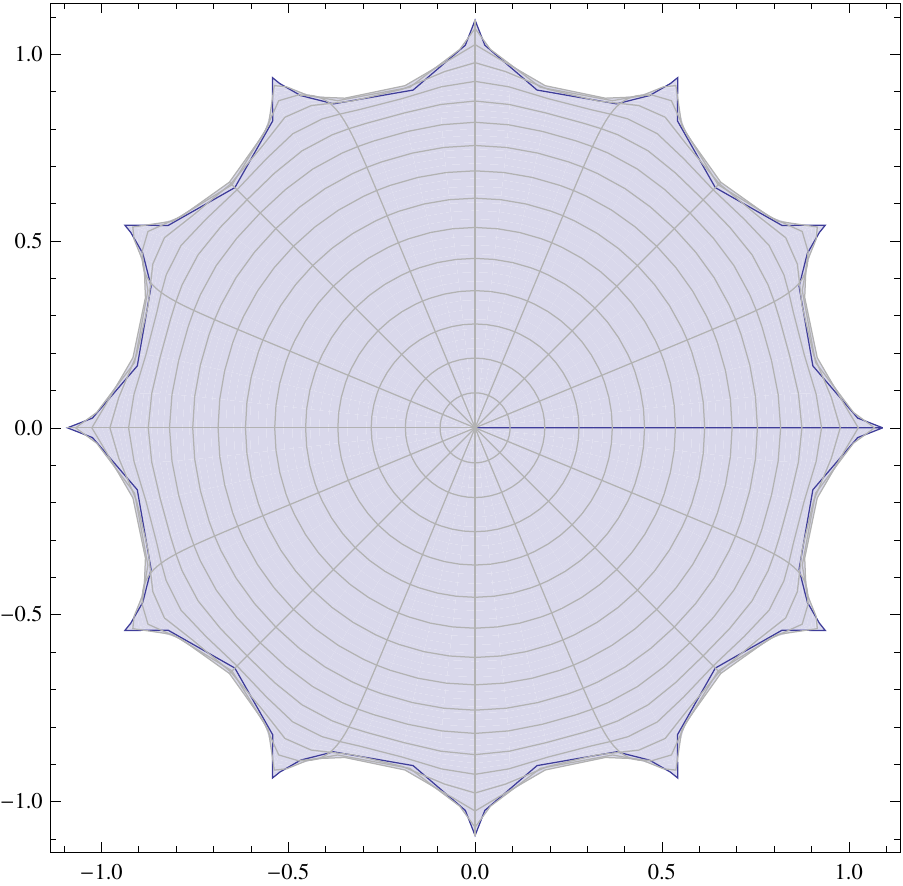}
\end{center}
$n=10$ \hspace{5cm} $n=11$

\caption{Graphs of $u(z)$ in $\protect \ID$ for certain values of $n$\label{Fig-u(n)}}
\end{figure}
\begin{example}
For $n\ge 2$, we consider the biharmonic mappings
$$ u(z)=z+\frac{\overline{z}^{n}}{n}+(1-|z|^{2})\frac{1}{2}(z+\overline{z}^{n})
$$
which may be rewritten as $u(z)= |z|^2F_1(z)+F_2(z)$, where
$$ F_1(z)=- \frac{1}{2}(z+\overline{z}^{n})~\mbox{ and }~ F_2(z)=\frac{3z}{2}+\left(\frac{n+2}{2n}\right )\overline{z}^{n}.
$$
It is easy to see that for each $n\ge 2$, the harmonic mapping $H(z)=z+(1/n)\overline{z}^{n}$ is univalent in  $\ID$.

We next show that  $u(z)$ is univalent in $\ID$. According to Theorem \Ref{AAP-Theo0}, it suffices to show that for each
$z\in\mathbb{D}\backslash\{0\}$ and $t\in\left(0,\pi/2\right]$, the following inequality holds:
$$\frac{3z}{2}-\left(\frac{n+2}{2n}\right )\overline{z}^{n}\frac{\sin nt}{\sin t} +
|z|^2\left( -\frac{z}{2}+\frac{\overline{z}^{n}}{2}\frac{\sin nt}{\sin t}
\right ) \neq 0,
$$
or equivalently
\be\label{AAP7-eq7}
A(z)=3z-|z|^2z \neq B(z)=\left( \frac{n+2}{n}-|z|^2\right )\overline{z}^{n}\frac{\sin nt}{\sin t}  .
\ee
The fact that $|\sin nt| \le n |\sin t|$  for all $t \in [0, \pi/2]$ and $n \geq 2$ (which may easily be verified by a method of induction), may
be used to verify the last relation. Because
$$|A(z)|\geq |z|(3-|z|^2) ~\mbox{ and }~ |B(z)|\leq \left( \frac{n+2}{n}-|z|^2\right )n|z|^{n},
$$
the relation \eqref{AAP7-eq7} holds for $z\in\mathbb{D}\backslash\{0\}$ and $t\in\left(0,\pi/2\right]$, provided that
\be\label{AAP7-eq8}
3-|z|^2>( n+2-n|z|^2)|z|^{n-1}.
\ee
For $n=2,3$, the inequality \eqref{AAP7-eq8} reduces to $(1-|z|)^2(3+2|z|)>0$ and $3(1-|z|^2)^2>0$, respectively. Thus, \eqref{AAP7-eq8} holds and so,
$u(z)$ is univalent in $\ID$ for $n=2,3$. The general case may be proved by the method of induction applied to \eqref{AAP7-eq8}. In fact, it is a simple
exercise to see that
$$(n+2-n|z|^2)|z|^{n-1}>(n+3-(n+1)|z|^2)|z|^{n}
$$
is equivalent to
$$(1-|z|)^2(n+2+(n+1)|z|)>0
$$
which obviously holds for $z\in\mathbb{D}\backslash\{0\}$ and for $z\in\mathbb{D}\backslash\{0\}$. Consequently, \eqref{AAP7-eq8} holds for all $n\geq 2$.
Hence, $u(z)$ is univalent in $\ID$ for all $n\geq 2$.
%
The graphs of $u(z)$, for certain values of $n\geq 1$, are shown in Figures \ref{Fig-u(n)}.
\end{example}

\begin{example}
Next we consider the harmonic function $H(z)=w_1(z)+\overline{w_2(z)},$  where $w_{1}(z)=-\log (1-z)$ and  $w_{2}(z)=-z-\log (1-z)$.
Then the function $u(z)$ defined in Problem \ref{AAP-prob1} takes the form
\be\label{AAP7-eq9}
u(z)=-\overline{z}-2\log |1-z|+(1-|z|^{2})\frac{1}{2}\left( \frac{z}{1-z}+
\frac{\overline{z}^{2}}{1-\overline{z}}\right),
\ee
or equivalently as $u(z)= |z|^2F_1(z)+F_2(z)$, where
$$ F_1(z)=- \frac{1}{2}\left (\frac{z}{1-z}+
\frac{\overline{z}^{2}}{1-\overline{z}}\right )
$$
and
$$ F_2(z) =-\log (1-z) +\frac{z}{2(1-z)} +\overline{\left (-z -\log (1-z) +\frac{z^2}{2(1-z)}\right )}.
$$
Our calculations suggest that $u(z)$ defined by \eqref{AAP7-eq9} is univalent in $\ID$ although we are unable to
prove this at present with a short sketch. However, for a ready reference, the graphs of $u(z)$ for $|z| <1/4$, $|z|<1/2$, $|z| <3/4$ and $|z|<1$ are
shown in Figure \ref{Fig2-u(n)}.
\begin{figure}
\begin{center}
\includegraphics[height=5.0cm, width=5.5cm, scale=1]{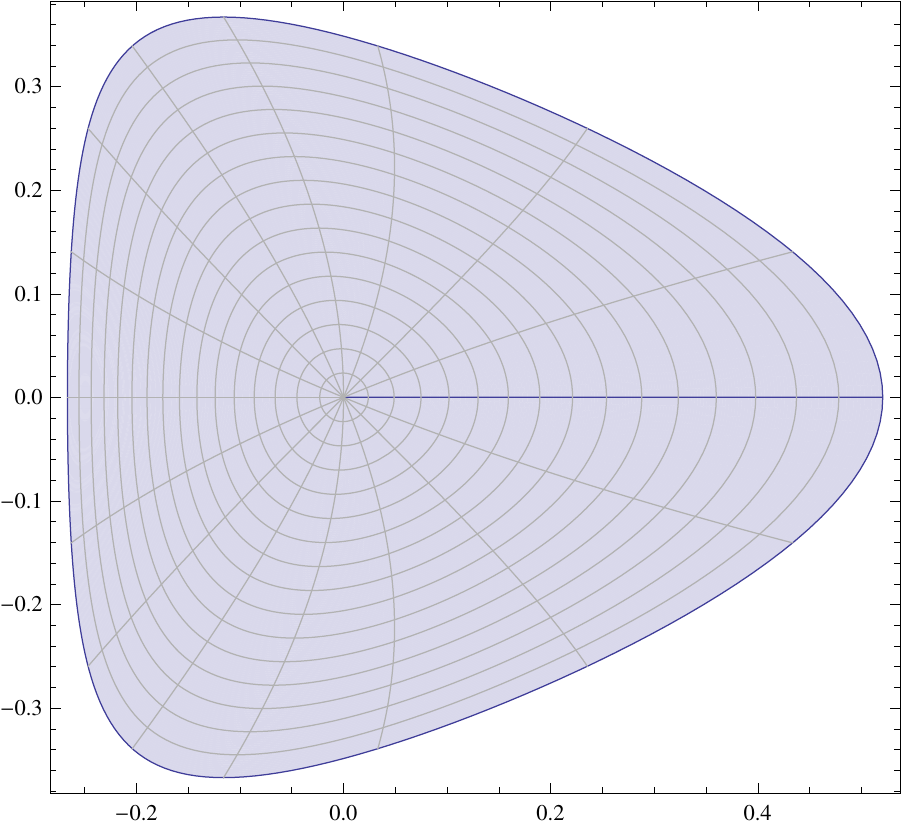}
\hspace{1cm}
\includegraphics[height=5.0cm, width=5.5cm, scale=1]{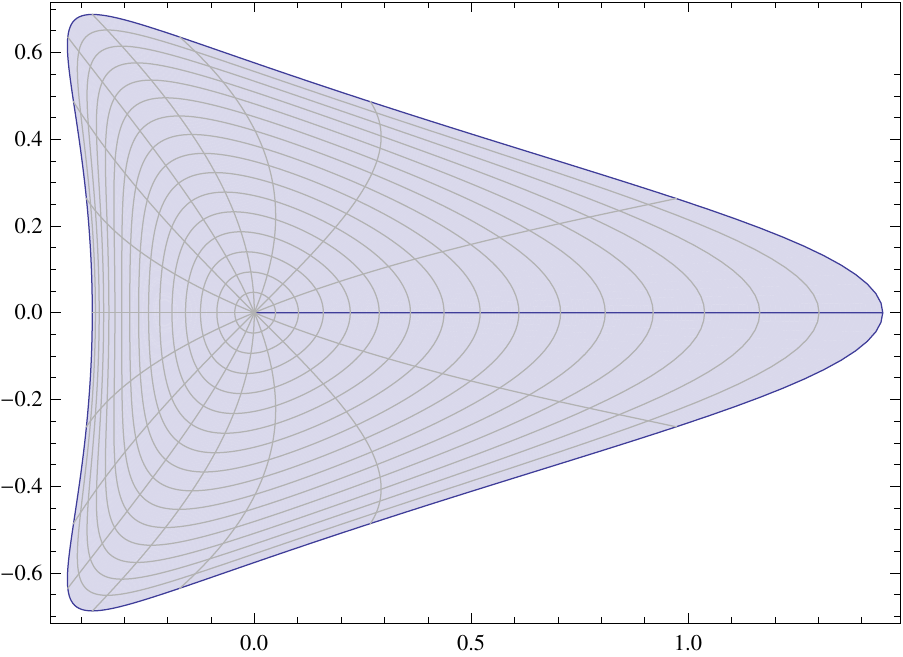}
\end{center}
$u(|z| <1/4)$ \hspace{5cm} $u(|z| < 1/2)$

\vspace{0.25cm}

\begin{center}
\includegraphics[height=5.0cm, width=5.5cm, scale=1]{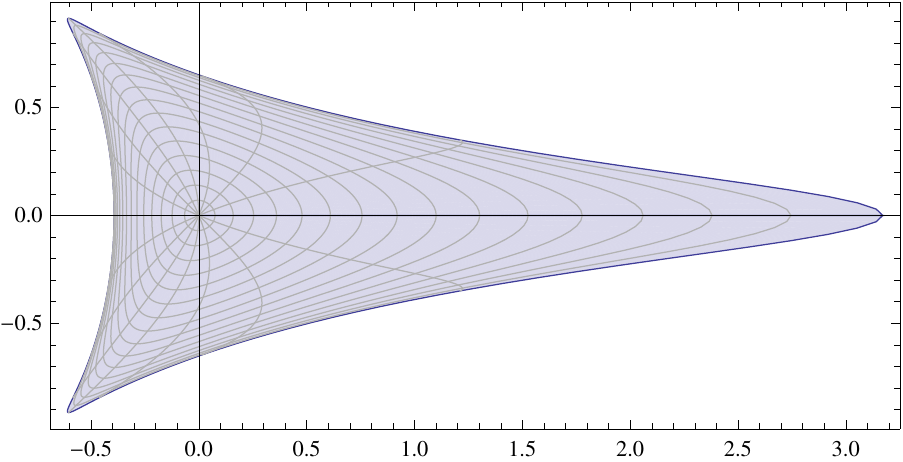}
\hspace{1cm}
\includegraphics[height=5.0cm, width=5.5cm, scale=1]{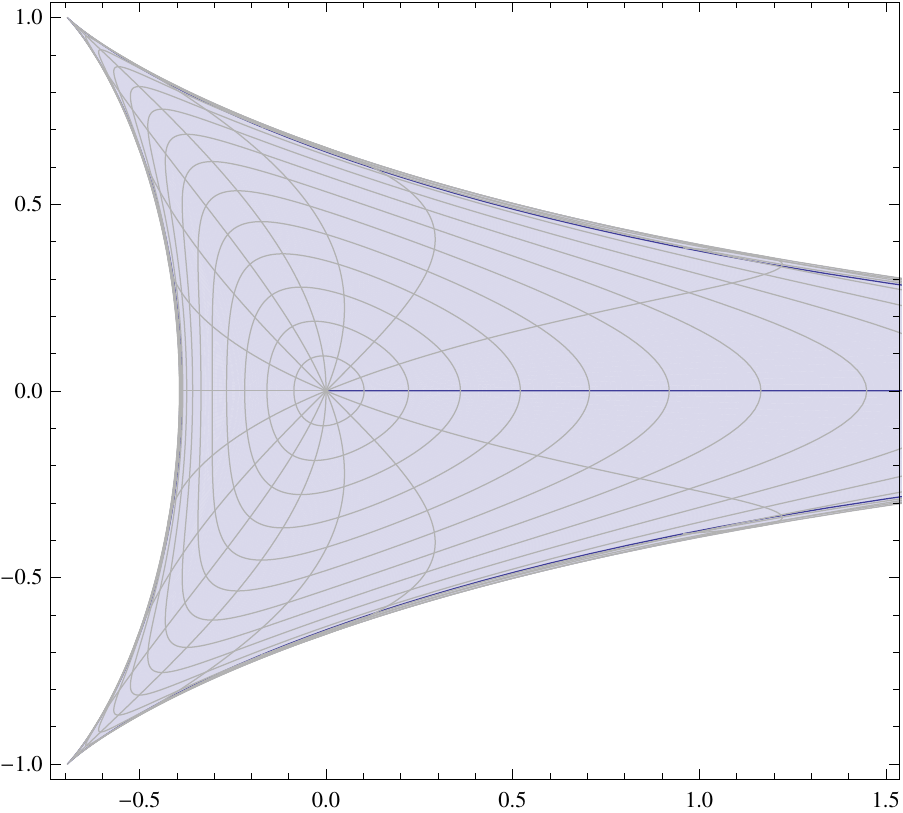}
\end{center}
$u(|z| <3/4)$ \hspace{5cm} $u(|z| < 1)$

\caption{Graphs of $u(z)$ for $|z| <1/4$, $|z|<1/2$, $|z| <3/4$ and $|z|<1$ \label{Fig2-u(n)}}
\end{figure}

\end{example}

We consider the class ${\mathcal C}(\alpha)$ of locally univalent functions $f(z)=z+a_2z^2+\cdots $ in $\ID$
satisfying the condition
$${\rm Re}\left(1+\frac{zf''(z)}{f'(z)}\right)>\alpha,\quad z\in\ID,
$$
where $\alpha \in [-1/2,1)$. For $\alpha=0$, ${\mathcal C}(0)$ represents the usual class of univalent convex mappings in $\ID$.
It is well-known that functions in ${\mathcal C}(-1/2)$, called convex functions of order $-1/2$, are univalent in $\ID$.
Moreover, ${\mathcal C}(-1/2)$ is contained in the class of functions convex in some direction and hence, functions in
${\mathcal C}(-1/2)$ are also close-to-convex in $\ID$. The class  ${\mathcal C}(-1/2)$ has been used to propose
a conjecture by Mocanu \cite{Mocanu-2011} which was later solved
by Bshouty and Lyzzaik \cite{Bshouty-Lyzzaik-2010} (see also  \cite{samy-sairma-pre13} for a general result).
This is another reason we wish to investigate properties of biharmonic mappings connected with the class  ${\mathcal C}(-1/2)$.

\begin{theorem}\label{AAP7-theo1}
Suppose that $u(z)=h(z)+(1-|z|^2)\frac{1}{2}zh'(z)$, where $h(z)=z+\cdots $ is analytic in $\ID$. Then we have the following
\begin{enumerate}
\item[{\rm (1)}] $u$ is sense-preserving in $\ID$ if $h$ is convex (univalent) in $\ID$.
\item[{\rm (2)}] $u$ is sense-preserving in $\ID$ even if $h$ is a convex function of order $\alpha$, $-1/2\leq \alpha <0$, in $\ID$.
In particular, $u$ is sense-preserving for $|z|<1$ if $h$ is a convex function of order $-1/2$ in $\ID$.
\item[{\rm (3)}] $u$ is sense-preserving for $|z|<\sqrt{7}-2 \approx 0.64575$ if $h$ is univalent in $\ID$.
\end{enumerate}
\end{theorem}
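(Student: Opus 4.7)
The plan is to reduce all three statements to a single pointwise inequality for the logarithmic derivative $w(z) := zh''(z)/h'(z)$, and then verify this inequality in each regime. A direct computation starting from
$u(z) = h(z) + \tfrac{1}{2}zh'(z) - \tfrac{1}{2}z^2\bar{z}\,h'(z)$
yields
$$u_z(z) = \left(\tfrac{3}{2} - |z|^2\right) h'(z) + \tfrac{1}{2}(1-|z|^2)\,zh''(z), \qquad u_{\bar z}(z) = -\tfrac{1}{2}z^2 h'(z).$$
Under each of the three hypotheses $h$ is locally univalent in $\ID$, so $h'(z) \neq 0$, and a short calculation shows that the sense-preserving condition $|u_z| > |u_{\bar z}|$ is equivalent, with $r = |z|$, to the master inequality
$$(\star)\qquad \left|(3-2r^2) + (1-r^2)\, w(z)\right| > r^2.$$

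For (1), convexity of $h$ gives $\real w(z) > -1$, and hence
$\real\!\left[(3-2r^2) + (1-r^2)w(z)\right] > (3-2r^2) - (1-r^2) = 2 - r^2 > r^2$,
which implies $(\star)$. For (2), convexity of order $\alpha \in [-\tfrac{1}{2}, 0)$ gives $\real w(z) > \alpha - 1$; the corresponding real-part bound becomes $2 - r^2 + (1-r^2)\alpha$, and this exceeds $r^2$ precisely when $(1-r^2)(\alpha + 2) > 0$, which holds for every $\alpha > -2$. Thus (1) and (2) both collapse to a real-part estimate against the same target.

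For (3), I would invoke the classical estimate for univalent $h(z) = z + a_2 z^2 + \cdots$,
$$\left|\frac{zh''(z)}{h'(z)} - \frac{2r^2}{1-r^2}\right| \le \frac{4r}{1-r^2},$$
so that $(1-r^2)w(z)$ lies in the closed disk of radius $4r$ centered at $2r^2$, and therefore $(3-2r^2) + (1-r^2)w(z)$ lies in the closed disk of radius $4r$ centered at $3$. Consequently $(\star)$ follows from $3 - 4r > r^2$, i.e., from $r < \sqrt{7} - 2$. The only real obstacle is the bookkeeping that leads to $(\star)$: once $u_z$ and $u_{\bar z}$ are grouped so that the factor $(1-r^2)$ comes out cleanly, parts (1)--(2) dissolve into an elementary real-part estimate, and (3) into a disk-of-values argument powered by the standard univalent distortion bound.
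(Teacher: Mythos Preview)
Your proof is correct and follows the paper's approach almost line for line: the same expressions for $u_z$ and $u_{\bar z}$, the same reduction to the inequality $\bigl|(3-2r^2)+(1-r^2)w(z)\bigr|>r^2$ (the paper writes the left side as $\bigl|2-r^2+(1-r^2)(1+w)\bigr|$, which is identical), and the same case-by-case verification. The one noteworthy difference is in part~(2): you use only the definitional bound $\real w>\alpha-1$ and close the estimate in one line via $(1-r^2)(\alpha+2)>0$, whereas the paper takes a detour through subordination to $(1+(1-2\alpha)z)/(1-z)$, computes the image disks, and then factors a quartic in $r$---your argument is strictly more elementary and yields the same conclusion (indeed a stronger one, since it works for all $\alpha>-2$). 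For part~(3), your disk-of-values form of the distortion estimate and the paper's real-part form both give $\bigl|\,\cdot\,\bigr|\ge 3-4r$ and hence the identical threshold $r<\sqrt{7}-2$.
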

\begin{proof}
Set $|z|=r$ and observe that the routine calculations give
\beqq
u_{z}(z)&=&h'(z)+(1-r^{2})\frac{1}{2}\left[ h'(z)+zh''(z)\right] -\frac{1}{2}\overline{z}zh'(z)\\
&=&\frac{1}{2}(3-2r^2)h'(z)+\frac{1}{2}(1-r^{2})zh''(z)\\
&=&\frac{h'(z)}{2}\left[ 2-r^2+ (1-r^{2}) \left (1+\frac{zh''(z)}{h'(z)}\right) \right ]
\eeqq
and $u_{\overline{z}}(z)=-(1/2)z^{2}h'(z)$ so that
\beq\label{AAP7-eq3}
J_u(z)&=&|u_{z}(z)|^{2}-|u_{\overline{z}}(z)|^{2} \nonumber\\
&=&\frac{|h'(z)|^2}{4}\left[ \left |2-r^2+ (1-r^{2}) \left (1+\frac{zh''(z)}{h'(z)}\right)\right|^2-r^4 \right ]
\eeq
which is clearly positive if $h$ is convex in $\ID$. The part (1) follows, but for the proof of the remaining two cases, we need to supply
some details.

Suppose that  $h$ is a convex function of order $\alpha$, $-1/2\leq \alpha <0$.  Then by the definition,
$$1+\frac{zh''(z)}{h'(z)}\prec A(z):=\frac{1+(1-2\alpha) z}{1-z}, ~\mbox{ i.e., }~
\frac{zh''(z)}{h'(z)}\prec \frac{2(1-\alpha)z}{1-z}, \quad z\in\ID,
$$
where $\prec$ is the usual subordination (see for example  \cite[Chapter~6]{Duren:univ}).
It is a simple exercise to see that $A(z)$ maps the disk $|z|< r$ conformally onto the disk
$$\left \{w:\, \left |w- \frac{1+(1-2\alpha)r^2}{1-r^2}\right |<\frac{2(1-\alpha)r}{1-r^2}\right \}
$$
so that ${\rm Re}\, (w)>(1-(1-2\alpha)r)/(1+r).$ This observation shows that
\beqq
2-r^2+ (1-r^2) {\rm Re}\left (1+\frac{zh''(z)}{h'(z)}\right)
&>& 2-r^2+ (1-r^2)\left (\frac{1-(1-2\alpha)r}{1+r}\right)\\
&=&2(1-\alpha)(1-r)+(1+2\alpha) (1-r^2)+r^2
\eeqq
which is positive for all $r\in [0,1)$. Again, it follows that
\beqq
J_u(z)&\geq& \frac{|h'(z)|^2}{4}\left[ 3-2(1-\alpha)r -2\alpha r^2)^2-r^4 \right ]\\
&=&\frac{|h'(z)|^2}{4}[3-2(1-\alpha)r -(2\alpha+1) r^2][3-2(1-\alpha)r -(2\alpha -1)r^2]\\
&=&\frac{|h'(z)|^2}{4}(1-r)(3 +(2\alpha+1)r) [3(1-r)+r(1+2\alpha) +(1-2\alpha)r^2]
\eeqq
showing that $J_u(z)>0$ for $|z|<1$.

In the final case, we suppose that $h$ is univalent in $\ID$. Then from the well-known result (see for instance the proof of
Theorem 3 in \cite[p.~32]{Duren:univ}), it follows that
$$ {\rm Re}\left ( 1+\frac{zh''(z)}{h'(z)} \right)> \frac{1-4r+r^2}{1-r^2} ~\mbox{ for $|z|=r$}
$$
and thus, using the last relation, we find that
$$2-r^2+ (1-r^2) {\rm Re}\left (1+\frac{zh''(z)}{h'(z)}\right)> 3-4r
$$
which is non-negative whenever $r\leq r_1=3/4$. Consequently, for $|z|<r_1$, we find that
\beqq
J_u(z)&\geq& \frac{|h'(z)|^2}{4}\left[ (3-4r)^2-r^4 \right ]\\
&=&\frac{|h'(z)|^2}{4}(3-4r-r^2)(3-4r+r^2)\\
&=&\frac{|h'(z)|^2}{4}(3-4r-r^2)(1-r)(3-r)
\eeqq
showing that $J_u(z)>0$ for $|z|<\sqrt{7}-2 \approx 0.64575$.

The proof of the theorem is complete.
\end{proof}

\begin{theorem}\label{AAP7-theo2}
Suppose that $u(z)=h(z)+(1-|z|^{2})\frac{1}{2}zh'(z)$, where $h(z)=z+\cdots $ is a convex of order $\alpha \in [0,1)$ in $\ID$.
Then the biharmonic mapping $u(z)$ is univalent in the unit disk $\ID$ if $\alpha\geq 1/2$, and in
the subdisk $|z|<r$ if $\alpha \in [0,1/2)$, where
$$r= \frac{-(1-\alpha)+\sqrt{(1-\alpha)^2+1-2\alpha)}}{1-2\alpha}.
$$
\end{theorem}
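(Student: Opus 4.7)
The plan is to reduce the global univalence of $u$ to univalence of a one‑parameter family of auxiliary analytic mappings restricted to a circle. For each $r\in[0,1)$, introduce the analytic function
$$\tilde u_{r}(z)=h(z)+\frac{1-r^{2}}{2}\,zh'(z),\qquad z\in\ID,$$
and note that $u(z)=\tilde u_{|z|}(z)$, so $u$ and $\tilde u_{r}$ agree on the circle $|z|=r$. The scheme is: (i) show that $\tilde u_{r}$ is univalent on $\overline{\ID}_{r}$ for each $r$ in the claimed range; (ii) deduce that $u|_{|z|=\rho}$ is a Jordan curve for every $\rho$ strictly less than this radius; (iii) upgrade to global univalence of $u$ on $\ID_{\rho}$ by combining this boundary injectivity with the sense‑preservation of $u$ proved in Theorem~\ref{AAP7-theo1}(1) (which applies because $\alpha\geq 0$), via a degree‑theoretic/Rad\'o–Kneser type argument‑principle computation for sense‑preserving $C^{1}$ mappings; finally let $\rho$ tend to the claimed radius.

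The heart of the proof is step (i). Since $h$ is convex (hence univalent with $h'\neq 0$), Kaplan's close‑to‑convex criterion, applied with $h$ as the convex companion, reduces the univalence of $\tilde u_{r}$ on $\overline{\ID}_{r}$ to the pointwise estimate
$$\left|\frac{\tilde u_{r}'(z)}{h'(z)}-1\right|=\frac{1-r^{2}}{2}\left|1+\frac{zh''(z)}{h'(z)}\right|<1\qquad\text{for }|z|\leq r.$$
To estimate the bracketed factor I would invoke the subordination
$$1+\frac{zh''(z)}{h'(z)}\prec\frac{1+(1-2\alpha)z}{1-z},$$
which is precisely the convexity of $h$ of order $\alpha$. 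The image of $\overline{\ID}_{r}$ under the right‑hand M\"obius transformation is the closed disk with centre $(1+(1-2\alpha)r^{2})/(1-r^{2})$ and radius $2r(1-\alpha)/(1-r^{2})$, whose maximum modulus simplifies—after cancelling a common factor $(1+r)$—to $(1+(1-2\alpha)r)/(1-r)$. Substituting gives
$$\left|\frac{\tilde u_{r}'(z)}{h'(z)}-1\right|\leq\frac{(1+r)(1+(1-2\alpha)r)}{2},$$
and this is strictly less than $1$ exactly when
$$(1-2\alpha)r^{2}+2(1-\alpha)r-1<0.$$
For $\alpha\geq 1/2$ this holds throughout $[0,1)$: the coefficient of $r^{2}$ is nonpositive, the quadratic equals $-1$ at $r=0$ and $2-4\alpha\leq 0$ at $r=1$, while in the range $\alpha\in[2/3,1)$ the interior vertex value $(1-\alpha)^{2}/(2\alpha-1)-1$ is still negative (since $(1-\alpha)^{2}<2\alpha-1$ there). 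For $\alpha\in[0,1/2)$ the leading coefficient is positive, the expression is negative at $r=0$, and vanishes exactly at the positive root
$$r=\frac{-(1-\alpha)+\sqrt{(1-\alpha)^{2}+1-2\alpha}}{1-2\alpha},$$
which is the value stated in the theorem.

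The main obstacle I anticipate is step (iii), the upgrade from injectivity of $u$ on each circle $|z|=\rho$ to global injectivity on $\ID_{\rho}$. For a $C^{1}$ sense‑preserving (hence locally homeomorphic) mapping of $\overline{\ID}_{\rho}$ whose boundary image is a Jordan curve, a standard degree/argument‑principle computation shows that the boundary winding around any interior target is $+1$, forcing $u$ to be a homeomorphism onto the Jordan interior. This Rad\'o/Kneser type principle is classical for harmonic mappings, and the biharmonic setting requires only continuity and local homeomorphy, so the same proof applies, but its invocation in the present context should be written out carefully.
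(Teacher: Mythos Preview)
Your approach is correct and essentially the same as the paper's. The paper does not name the auxiliary analytic map $\tilde u_r$ or cite Kaplan's criterion explicitly; instead it assumes $u(z_1)=u(z_2)$ on a circle $|z|=\rho$, sets $\psi=(zh')\circ h^{-1}$ on the convex image $h(\ID_\rho)$, and runs the Noshiro--Warschawski line-integral by hand to reach exactly your inequality $(1+r)(1+(1-2\alpha)r)<2$, after which it likewise appeals (tersely) to sense-preservation for the passage from circlewise to global injectivity.
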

\begin{proof}
The assumption, in particular, gives that $h$ is convex in $\ID$ and thus, by Theorem \ref{AAP7-theo1}(1), the function $u$
is sense-preserving in $\ID$. We claim that $u$ is univalent in $|z|=r$.

Let $D=h(\ID)$ and $\varphi (z)=zh'(z)$. Define $\psi\colon D\ra \IC$ by $\psi (w)=\big (\varphi \circ h^{-1}\big )(w)$.
Then  $\psi$ is analytic on the convex domain $D$ and
\be\label{AAP7-eq4}
\psi '(w)=\frac{\varphi '(z)}{h'(z)}=1+\frac{zh''(z)}{h'(z)}.
\ee
Now, we suppose that $z_1,z_2\in \ID$, $z_1\neq z_2$, $|z_1|=|z_2|=\rho$ for an arbitrary fixed $\rho$, where $r=|z|<\rho$ and
such that $u (z_1)= u(z_2)$. Also, we set $w_1 = h(z_1)$ and $w_2 = h(z_2)$.
Then $\psi (w_1)=\varphi (z_1)=z_1h'(z_1)$ and $\psi (w_2)=\varphi (z_2)=z_2h'(z_2)$ so that
$$ w_1-w_2= (1-\rho^2)\frac{1}{2}\left( z_2h'(z_2)-z_1h'(z_1)\right)=(1-\rho ^2)\frac{1}{2}(\psi (w_2)-\psi (w_1)) .
$$
On the other hand, because $\psi$ is analytic on the convex domain $D$, this can be
equivalently rewritten as
\be\label{AAP7-eq5}
w_1-w_2 = (1-\rho^2)\int_{[w_1,w_2]} \psi '(w)\,dw ,
\ee
where the integral is taken over a straight-line segment $\Gamma=[w_1,w_2]$ connecting $w_1$ to $w_2$ in the convex domain $D$.
By convexity, $\Gamma \subset h(\overline{\ID}_{\rho})$ and the curve $\gamma =h^{-1}(\Gamma )$, joining the points $z_1$and $z_2$,
lies in the subdisk $\overline{\ID}_{\rho}\subset \ID$.  Consequently,  ${\inf}_{\gamma }(1-r^2)\geq (1-\rho^2)$ and thus, \eqref{AAP7-eq4} and
\eqref{AAP7-eq5} give
\beqq
\left | w_1-w_2\right|&\leq
&(1-\rho^2)\frac{1}{2} \int_{\Gamma }\left |1+\frac{zh''(z)}{h'(z)}\right |\,|dw|\\
&\leq & \frac{1}{2}\int_{\Gamma }(1-|z|^2)\left |1+\frac{zh''(z)}{h'(z)}\right |\,|dw|\\
&\leq & \frac{1}{2}\int_{\Gamma }(1-|z|^2) \left (\frac{1+(1-2\alpha)|z|}{1-|z|}\right ) |dw|\\
&<& \int_{\Gamma } |dw| =\left | w_1-w_2\right|
\eeqq
because, by hypothesis,
$$(1+|z|)(1+(1-2\alpha)|z|)< 2, ~\mbox{ i.e., $(1-2\alpha)|z|^2- 2(1-\alpha)|z|-1 <0$,}
$$
which is not possible. Thus, $u(z_1)\neq u(z_2)$ and this proves the univalency of $u(z)$
on the circle $|z|=\rho$. Since $u$ is sense-preserving in $\ID$, this holds in $|z|<r$ for each $r\leq \rho$,
we find that $u$ is univalent in $|z|<r$. This proves the theorem.
\end{proof}

\begin{corollary}\label{AAP7-cor2}
If $h$ is convex in $\ID$, then  $u(z)=h(z)+(1-|z|^{2})\frac{1}{2}zh'(z)$
is univalent on every disk $|z|<\sqrt{2}-1\approx 0.41421356$.
\end{corollary}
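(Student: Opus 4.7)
The plan is that this corollary is an immediate specialization of Theorem \ref{AAP7-theo2}. A convex (univalent) function $h$ in $\ID$ is, by definition, a convex function of order $\alpha=0$, so the hypothesis of Theorem \ref{AAP7-theo2} is satisfied with $\alpha=0$. Since $0\in[0,1/2)$, Theorem \ref{AAP7-theo2} guarantees univalence of
\[
u(z)=h(z)+(1-|z|^{2})\tfrac{1}{2}zh'(z)
\]
on the subdisk $|z|<r$, where $r$ is given by the formula stated in that theorem.

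The only thing left to check is that the radius in the corollary matches the value of $r$ when $\alpha=0$. Substituting $\alpha=0$ into
\[
r= \frac{-(1-\alpha)+\sqrt{(1-\alpha)^2+1-2\alpha}}{1-2\alpha}
\]
gives
\[
r=\frac{-1+\sqrt{1+1}}{1}=\sqrt{2}-1.
\]
This matches the claim, so the corollary follows at once.

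I expect no obstacle: the statement is essentially a numerical unpacking of the general Theorem \ref{AAP7-theo2} at the endpoint $\alpha=0$. The only thing one might wish to add is a sentence pointing out that for convex $h$ the sense-preserving conclusion of Theorem \ref{AAP7-theo1}(1) also applies globally, but that is already built into the proof of Theorem \ref{AAP7-theo2} and need not be repeated here.
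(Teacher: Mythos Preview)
Your proposal is correct and matches the paper's approach: the corollary is stated immediately after Theorem~\ref{AAP7-theo2} with no separate proof, so it is intended to follow by specializing that theorem to $\alpha=0$, exactly as you do. The substitution $\alpha=0$ in the radius formula yields $r=\sqrt{2}-1$, and nothing further is needed.
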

We conjecture that the number $\sqrt{2}-1$ in Corollary \ref{AAP7-cor2} could be improved to $1$.

\begin{corollary}\label{AAP7-cor3}
If $h$ is convex of order $1/2$ in $\ID$, then  $u(z)=h(z)+(1-|z|^{2})\frac{1}{2}zh'(z)$
is univalent in $\ID$.
\end{corollary}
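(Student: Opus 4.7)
The plan is to recognize Corollary \ref{AAP7-cor3} as the boundary case $\alpha = 1/2$ of Theorem \ref{AAP7-theo2}. The theorem already asserts that, whenever $\alpha \geq 1/2$, the biharmonic mapping $u$ is univalent in the full disk $\ID$, so in principle nothing more needs to be done than to quote the theorem with this specific choice of $\alpha$. The only point worth a brief sanity check is that the proof of Theorem \ref{AAP7-theo2} genuinely delivers univalency on all of $\ID$ at $\alpha = 1/2$, rather than breaking down at the threshold.

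To see this, I would trace through the critical inequality that drives the proof of Theorem \ref{AAP7-theo2}. There, the contradiction $|w_1 - w_2| < |w_1 - w_2|$ is produced provided
\[
(1+|z|)\bigl(1+(1-2\alpha)|z|\bigr) < 2, \quad \text{i.e.,} \quad (1-2\alpha)|z|^2 - 2(1-\alpha)|z| - 1 < 0.
\]
Substituting $\alpha = 1/2$, the left side reduces to $-|z| - 1$, which is strictly negative for every $z \in \ID$ (in fact for every $z \in \IC$). Consequently, the argument in Theorem \ref{AAP7-theo2} applies on every circle $|z| = \rho < 1$, and combined with the fact that $u$ is sense-preserving in $\ID$ by Theorem \ref{AAP7-theo1}(1) (a convex function of order $1/2$ is, in particular, convex univalent), we conclude that $u$ is univalent throughout $\ID$.

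I do not anticipate any real obstacle here: the result is essentially a free specialization of Theorem \ref{AAP7-theo2}. One could alternatively present it as a limiting statement, observing that the radius
\[
r_\alpha = \frac{-(1-\alpha) + \sqrt{(1-\alpha)^2 + 1 - 2\alpha}}{1-2\alpha}
\]
from Theorem \ref{AAP7-theo2} tends to $1$ as $\alpha \to (1/2)^{-}$, which is consistent with the corollary. Either route yields a one-line proof.
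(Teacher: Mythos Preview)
Your proposal is correct and matches the paper's approach exactly: the paper gives no separate proof for Corollary~\ref{AAP7-cor3}, treating it as the immediate specialization $\alpha = 1/2$ of Theorem~\ref{AAP7-theo2}, which is precisely what you do. Your additional sanity check of the key inequality at $\alpha = 1/2$ is a nice touch but not strictly needed, since Theorem~\ref{AAP7-theo2} already covers $\alpha \geq 1/2$ in its statement.
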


\section{Schwarz lemma for biharmonic mappings}\label{AAP7-sec3}
A well-known harmonic version of the classical Schwarz lemma due to
Heinz \cite{He} (see also \cite{Du-2004}) says the following.

\begin{Thm}\label{AAP7-LemC}
If $f\colon \ID\ra \ID$ is harmonic such that $f(0)=0$, then
$$
|f(z)|\leq\frac{4}{\pi}\arctan|z|\leq\frac{4}{\pi}|z|, \quad z\in \ID.
$$
This inequality is sharp for each $z\in\ID$. Furthermore, the bound is sharp everywhere
$($but is attained only at the origin$)$ for univalent harmonic mappings $f$ of $\ID$ onto itself with $f(0)=0$.
\end{Thm}

Our next aim is to prove a biharmonic version of the classical Schwarz lemma.

\begin{theorem}{\rm (Schwarz lemma for biharmonic mappings)}\label{AAP7-theo4}
Suppose that $H\colon \ID\ra \ID$ is harmonic and $H(z)=w_1(z)+\overline{w_2(z)},$  where both $w_1$ and $w_2$ are
analytic in $\ID$. If $u\colon \ID\ra \ID$ is a biharmonic function of the form
$$u(z)=H(z)+(1-|z|^{2})\frac{1}{2}(zw_1'(z)+\overline{zw_2'(z)})
$$
such that $u(0)=0$, then
$$|u(z)|\leq \frac{4}{\pi }\arctan (|z|)+|z|
~\mbox{ and }~\Lambda _{u}(0)=|u_{z}(0)|+|u_{\overline{z}}(0)|\leq \frac{6}{\pi }.
$$
\end{theorem}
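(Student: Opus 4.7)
The plan is to split $u = H + V$, where
\[
V(z) := \tfrac{1}{2}(1-|z|^2)\bigl(zw_1'(z)+\overline{zw_2'(z)}\bigr),
\]
treat the two pieces separately, and then add the estimates. Since $V(0)=0$, the hypothesis $u(0)=0$ forces $H(0)=0$, and because $H$ is a harmonic self-map of $\ID$, Heinz's lemma (Theorem~\Ref{AAP7-LemC}) immediately yields $|H(z)|\le \tfrac{4}{\pi}\arctan|z|$. It therefore remains only to prove $|V(z)|\le |z|$, which by the triangle inequality reduces to the pointwise estimate
\[
|w_1'(z)| + |w_2'(z)| \le \frac{2}{1-|z|^2}, \qquad z\in\ID.
\]

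To establish this inequality I would exploit the Poisson representation of the bounded harmonic function $H$. Writing $H(z) = \int_{\IT} P(z,\zeta)\, H^*(\zeta)\, d\sigma(\zeta)$ with $|H^*|\le 1$ almost everywhere, and splitting
\[
P(z,\zeta) = \tfrac{1}{2}\,\frac{\zeta+z}{\zeta-z} + \tfrac{1}{2}\,\overline{\frac{\zeta+z}{\zeta-z}}
\]
into its analytic and antianalytic halves, one identifies the analytic summands $w_1$ and $w_2$ (up to additive constants) as Cauchy-type integrals of $H^*$ and $\overline{H^*}$, respectively. Differentiation under the integral sign, combined with $|H^*|\le 1$, yields
\[
|w_j'(z)| \le \int_{\IT} \frac{d\sigma(\zeta)}{|\zeta-z|^2} = \frac{1}{1-|z|^2}, \qquad j=1,2,
\]
and multiplying by $\tfrac{1}{2}(1-|z|^2)|z|$ gives $|V(z)|\le|z|$. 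Combining with the Heinz bound on $H$ completes the proof of the first claim.

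For the derivative estimate, a Wirtinger calculation using $\partial_z\bar z = 0$ and $\partial_z\overline{w_2'(z)} = 0$ produces $V_z(0) = \tfrac{1}{2}w_1'(0)$ and $V_{\bar z}(0) = \tfrac{1}{2}\overline{w_2'(0)}$; since $H_z(0)=w_1'(0)$ and $H_{\bar z}(0)=\overline{w_2'(0)}$, this gives $u_z(0)=\tfrac{3}{2}w_1'(0)$ and $u_{\bar z}(0)=\tfrac{3}{2}\overline{w_2'(0)}$, whence $\Lambda_u(0) = \tfrac{3}{2}\bigl(|w_1'(0)|+|w_2'(0)|\bigr)$. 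The proof is then finished by extracting the needed Schwarz-type bound at the origin from Heinz's lemma itself: expanding $H(z) = w_1'(0)z + \overline{w_2'(0)}\,\bar z + o(|z|)$ and letting $z=re^{i\theta}\to 0$ along the direction $\theta$ that maximises $|w_1'(0)e^{i\theta}+\overline{w_2'(0)}\,e^{-i\theta}|$ (the maximum is readily checked to equal $|w_1'(0)|+|w_2'(0)|$), one infers from $|H(z)|\le \tfrac{4}{\pi}|z|$ that $|w_1'(0)|+|w_2'(0)|\le \tfrac{4}{\pi}$, and multiplication by $\tfrac{3}{2}$ gives $\Lambda_u(0)\le \tfrac{6}{\pi}$. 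The main technical step is the Poisson-integral bound on $|w_j'(z)|$; once the Cauchy-integral representations of the analytic parts of $H$ are unpacked, both inequalities reduce to Heinz's lemma together with elementary bookkeeping.
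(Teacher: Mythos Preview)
Your proof is correct and follows essentially the same route as the paper's. Both arguments split $u=H+V$, bound $|H|$ by Heinz's lemma, and control $V$ by differentiating the Poisson representation of the bounded harmonic map $H$ and using $\int_{\IT}|\zeta-z|^{-2}\,d\sigma=(1-|z|^2)^{-1}$; the only cosmetic difference is that you bound $|w_1'|$ and $|w_2'|$ separately, while the paper bounds the full radial derivative $zw_1'+\overline{zw_2'}$ in one stroke. Your derivation of $\Lambda_u(0)=\tfrac{3}{2}(|H_z(0)|+|H_{\bar z}(0)|)$ and the extraction of $|H_z(0)|+|H_{\bar z}(0)|\le 4/\pi$ from Heinz's lemma likewise matches the paper, which simply quotes this last inequality as known.
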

\begin{proof}
The function $H$ has a Poisson representation
$$H(z) =\frac{1}{2\pi}\int_0^{2\pi} {\rm Re}\left ( \frac{1+e^{-i\theta}z}{1-e^{-i\theta}z}\right ) H(e^{i\theta })\,d\theta
=\frac{1}{2\pi}\int_0^{2\pi} \frac{1-|z|^2}{|1-e^{-i\theta}z|^2}H(e^{i\theta })\,d\theta .
$$
Then
$$\frac{r\partial H(z)}{\partial r}=\frac{z\partial H(z)}{\partial z}
=\frac{1}{2\pi }\int_0^{2\pi} \frac{2e^{-i\theta}z}{(1-e^{-i\theta}z)^2} H(e^{i\theta })\,d\theta
$$
and similarly, we have
$$\frac{z\partial H(z)}{\partial \overline{z}}
=\frac{1}{2\pi }\int_0^{2\pi} \frac{2e^{i\theta}z}{(1-e^{i\theta}\overline{z})^2} H(e^{i\theta })\,d\theta.
$$
Thus, we obtain that
$$H_z(0)=\frac{1}{\pi }\int_0^{2\pi} e^{-i\theta} H(e^{i\theta })\,d\theta, \quad
H_{\overline{z}}(0)=\frac{1}{\pi }\int_0^{2\pi} e^{i\theta} H(e^{i\theta })\,d\theta
$$
and
$$(1-|z|^2)\frac{1}{2}\frac{r\partial H(z)}{\partial r}\leq (1-|z|^2)|z|
\frac{1}{2\pi }\int_0^{2\pi} \frac{1}{|1-e^{-i\theta}z|^{2}}|H(e^{i\theta })|\,d\theta .
$$
Finally, we deduce that
\beqq
|u(z)|&\leq& \left |\frac{1}{2\pi }\int_0^{2\pi} \frac{1-|z|^{2}}{|1-e^{-i\theta}
z|^{2}}H(e^{i\theta })\,d\theta \right | +|z|\frac{1}{2\pi }\int _0^{2\pi} \frac{1-|z|^{2}}{|1-e^{-i\theta}z|^{2}}|H(e^{i\theta })|\,d\theta \\
&\leq &\frac{4}{\pi }\arctan (|z|)+|z|,
\eeqq
where the bound in the first term of the last inequality follows from Theorem \Ref{AAP7-LemC}. To prove the second inequality, we note that
\begin{eqnarray*}
u_{z}(z) &=&H_{z}(z)-\overline{z}\frac{1}{2}(zw_1'(z)+\overline{zw_2'(z)}) +\frac{1}{2}\left( 1-|z|^{2}\right) \left (
H_{z}(z)+zH_{zz}(z)\right ) \\
u_{\overline{z}}(z) &=&H_{\overline{z}}(z)-z\frac{1}{2}(zw_1'(z)+\overline{zw_2'(z)})+\frac{1}{2}\left( 1-|z|^{2}\right) %
\left ( H_{\overline{z}}(z)+\overline{z}H_{\overline{zz}}(z)\right )
\end{eqnarray*}
and, because $|H_{z}(0)|+|H_{ \overline{z}}(0)|\leq 4/\pi$, we deduce that
$$\Lambda _{u}(0)=|u_{z}(0)|+|u_{\overline{z}}(0)|=(3/2)(|H_{z}(0)|+|H_{ \overline{z}}(0)|)\leq \frac{6}{\pi }.
$$
This completes the proof of the lemma.
\end{proof}

%
%
%
From the proof of Theorem \ref{AAP7-theo4}, the following result is trivial.

\begin{corollary}
If $u\in {\mathcal F} $ and $u:\,\ID\rightarrow \ID$, then there exists a positive constant $c>0$ such that
$(1-|z|^{2}) (|u_z| +|u_{\overline{z}}|\leq c <\infty.$ That is, $u$ is a Bloch function.
\end{corollary}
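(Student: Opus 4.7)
The plan is to recycle the explicit formulas for $u_{z}$ and $u_{\overline{z}}$ derived at the end of the proof of Theorem~\ref{AAP7-theo4} and estimate each term after multiplying by $(1-|z|^{2})$. Those formulas express $u_{z}(z)$ and $u_{\overline{z}}(z)$ linearly in $H_{z}(z)$, $H_{\overline{z}}(z)$, $H_{zz}(z)$ and $H_{\overline{z}\overline{z}}(z)$ with coefficients that are polynomial in $z$, $\overline{z}$ and $(1-|z|^{2})$ and uniformly bounded on $\ID$, so the desired Bloch bound reduces to uniform estimates on these four derivatives of~$H$.

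To prepare such estimates I would first argue that $|H(z)|\le 1$ on $\ID$: since $u\in{\mathcal F}$ means $u=H+(1-|z|^{2})h$ and the factor $(1-|z|^{2})$ vanishes on $\IT$, the hypothesis $|u|\le 1$ forces $|H|\le 1$ on $\IT$, and the Poisson integral representation used at the start of the proof of Theorem~\ref{AAP7-theo4} propagates this to $|H(z)|\le 1$ on $\ID$. The pointwise computation at the origin carried out at the end of that same proof gives $|H_{z}(0)|+|H_{\overline{z}}(0)|\le 4/\pi$; pre-composing $H$ with the disk automorphism $\phi_{z_{0}}(w)=(w+z_{0})/(1+\overline{z_{0}}w)$ and applying this bound to $H\circ\phi_{z_{0}}$ at $0$ upgrades it to the Schwarz--Pick type inequality
\[
(1-|z|^{2})\bigl(|H_{z}(z)|+|H_{\overline{z}}(z)|\bigr)\le C_{1},\qquad z\in\ID,
\]
for a universal constant $C_{1}$. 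Because $w_{1}'=H_{z}$ and $\overline{w_{2}'}=H_{\overline{z}}$ are (anti-)holomorphic, Cauchy's estimate applied on the disk $D(z,(1-|z|)/2)$, on which $1-|\zeta|^{2}$ stays comparable to $1-|z|^{2}$, then yields
\[
(1-|z|^{2})^{2}\bigl(|H_{zz}(z)|+|H_{\overline{z}\overline{z}}(z)|\bigr)\le C_{2}.
\]

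Substituting these three estimates into the formulas for $u_{z}$ and $u_{\overline{z}}$ coming from the proof of Theorem~\ref{AAP7-theo4} and multiplying through by $(1-|z|^{2})$, each contributing term is dominated by $C_{1}$ or $C_{2}$, up to harmless factors in $|z|$ bounded by~$1$; summing gives $(1-|z|^{2})\bigl(|u_{z}(z)|+|u_{\overline{z}}(z)|\bigr)\le c$ throughout $\ID$, as required. The only delicate point is the second-order estimate: the Cauchy radius has to be chosen as a fixed fraction of $1-|z|$ so that the $1/(1-|z|)$ lost to Cauchy is exactly absorbed by the extra $(1-|z|^{2})$ already multiplying the $zH_{zz}$ contribution in $u_{z}$, and symmetrically for the $\overline{z}H_{\overline{z}\overline{z}}$ contribution in $u_{\overline{z}}$.
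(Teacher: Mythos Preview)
The paper gives no proof of this corollary; it simply states that the result is ``trivial'' from the proof of Theorem~\ref{AAP7-theo4}. Your proposal is a correct and complete fleshing-out of that remark: you reuse the explicit formulas for $u_{z}$ and $u_{\overline z}$ obtained there, bound the first-order terms by the harmonic Schwarz--Pick inequality $(1-|z|^{2})(|H_{z}|+|H_{\overline z}|)\le 4/\pi$ (which is exactly what the M\"obius-composition trick yields), and bound the second-order terms $zH_{zz}$, $\overline{z}H_{\overline z\,\overline z}$ by a Cauchy estimate applied to the holomorphic $w_{1}'$ and anti-holomorphic $\overline{w_{2}'}$ on a disk of radius comparable to $1-|z|$. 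This is precisely the argument the authors presumably intend; an equivalent variant is to differentiate the Poisson integral for $H$ directly, which produces bounds of the same order.

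One technical caveat worth recording: your first step, deducing $|H|\le 1$ on $\ID$ from $|u|\le 1$, passes through boundary values on $\IT$, but the corollary as stated does not assume $u\in C^{1}(\overline{\ID})$, so $H$ need not extend continuously to the circle. This looseness is already present in the paper (which uses the hypothesis $u\in\mathcal F\cap C^{1}(\overline{\ID})$ throughout Lemmas~\ref{AAP7-lem2}--\ref{AAP7-lem3} and Corollary~\ref{AAP7-cor1}); under that standing regularity assumption your argument is fine as written.
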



\begin{thebibliography}{99}

\bibitem{AbAb-08} Z. Abdulhadi and Y. Abu Muhanna,
Landau's theorem for biharmonic mappings,
\textit{J. Math. Anal. Appl.}, \textbf{338}(1) (2008), 705--709.

\bibitem{AbAbKh-2004} Z. Abdulhadi, Y.Abu Muhanna and S. Khoury,
On the univalence of the log-biharmonic mappings,
\textit{J. Math. Anal. Appl.}, \textbf{289}(2) (2004), 629--638.

\bibitem{AbAbKh-2005} Z. Abdulhadi, Y. Abu Muhanna and S. Khoury,
On univalent solutions of the biharmonic equations,
\textit{J. Inequal. Appl.} \textbf{2005}(5), 469--478.

\bibitem{AbAbKh-2006} Z. Abdulhadi, Y.Abu Muhanna and S. Khoury,
On some properties of solutions of the biharmonic Equations,
\textit{Appl. Math. Comput.} \textbf{177}(1) (2006), 346--351.

\bibitem{AH-2001} A. Abkar and H. Hedenmalm,
A Riesz representation formula for super-biharmonic functions,
\textit{Ann. Acad. Sci. Fenn. Math.}, {\bf 26} (2001), 305--324.

\bibitem{AbAli-2013} Y. Abu Muhanna and R. M. Ali,
Biharmonic maps and Laguerre minimal surfaces,
\textit{Abstr. Appl. Anal.} 2013, Art. ID 843156, 9 pp.

\bibitem{AbSch-1987}  Y. Abu Muhanna and G. Schober,
Harmonic mappings onto convex mapping domains,
\textit{Canad. J. Math.} \textbf{39}(6) (1987),  1489--1530.

\bibitem{AlRi-1996} A. Aleman, S. Richter and C. Sundberg,
Beurling's theorem for the Bergman space,
\textit{Acta Math.} \textbf{177} (1996), 275--310.

\bibitem{AmGaPo-2017}  K. F. Amozova, E. G. Ganenkova and S. Ponnusamy,
 Criteria of univalence and fully $\alpha$--accessibility for $p$--harmonic and $p$--analytic functions,
\textit{Complex Var. Elliptic Equ.}  \textbf{62}(8) (2017),  1165--1183.

\bibitem{Blas-1924} W. Blaschke,
\"{U}ber die Geometrie von Laguerre II: Fl\"{a}chentheorie in Ebenenkoordinaten,
\textit{Abh.  Math.  Sem.  Univ. Hamburg}
\textbf{3} (1924), 195--212.

\bibitem{Blas-1925} W. Blaschke,
\"{U}ber die Geometrie von Laguerre III: Beitr\"{a}ge zur Fl\"{a}chentheorie,
\textit{Abh.  Math.  Sem.  Univ. Hamburg}
\textbf{4} (1925), 1--12.

\bibitem{BobPink-1996} A. Bobenko and U. Pinkall,
Discrete isothermic surfaces,
\textit{J. Reine Angew. Math.} \textbf{475} (1996), 187--208.

\bibitem{Bshouty-Lyzzaik-2010} D.~Bshouty and A.~Lyzzaik,
Close-to-convexity criteria for planar harmonic mappings,
\textit{Complex Anal. Oper. Theory}, \textbf{5} (2011), 767--774.

\bibitem{ChPoWa-2009} S. Chen, S. Ponnusamy, and X. Wang,
Landau's theorem for certain biharmonic mappings,
\textit{Appl. Math. Comput.} \textbf{288}(2) (2009), 427--433.

\bibitem{Cho-1945} G. Choquet,
Sur un type de transformation analytique g\'{e}n\'{e}
ralisant la repr\'{e}sentation conforme et d\'{e}finie au moyen de fonctions
harmoniques,
\textit{Bull. Sci. Math.}  (2)\textbf{69} (1945), no.2,  156--165.

\bibitem{ChHerm-2007} M. Chuaqui and R. Hermandez,
Univalent harmonic mappings and linearly connected domains,
\textit{J. Math. Anal. Appl.}, \textbf{332} (2007), 1189--1197.

\bibitem{Clunie-Small-84} J.~G.~Clunie and T.~Sheil-Small,
\textrm{Harmonic univalent functions},
\textit{Ann. Acad. Sci. Fenn. Ser. A.I.} \textbf{9} (1984), 3--25.

\bibitem{Duren:univ}   P.~Duren,
\textit{Univalent Functions}, Springer-Verlag, 1983.

\bibitem{Du-2004}  P. Duren,
\textit{Harmonic Mappings in the Plane,} Cambridge Univ. Press, 2004.

\bibitem{DuSch-2004}  P. Duren and A. Schuster,
\textit{Bergman Spaces},
Mathematical Surveys and Monographs, 100. American Mathematical Society, Providence, RI, 2004. x+318 pp.

\bibitem{Gar-1951} P. R. Garabedian, A partial differential equation arising in conformal mapping,
\textit{Pacific J. Math. } \textbf{1} (1951), 485--524.

\bibitem{HaBre-1965} J. Happel and H. Brenner,
\textit{Low Reynolds Number Hydrodynamics}, Pretice-Hall, 1965.

\bibitem{HedelKoZhu-2000}
H. Hedenmalm, Haakan, B. Korenblum and K. Zhu,
\textit{Theory of Bergman spaces},
Graduate Texts in Mathematics, 199. Springer-Verlag, New York, 2000. x+286 pp.

\bibitem{He} E. Heinz,
On one-to-one harmonic mappings,
\textit{Pacific J. Math.} {\bf 9} (1959), 101--105.

\bibitem{Lab-1964} W. E. Langlois,
\textit{Slow Viscous Flow}, Macmillan Company, 1964.

\bibitem{Le-1936} H. Lewy,
On the non-vanishing of the Jacobian in certain one-to-one mappings,
\textit{Bull. Amer. Math. Soc.} {\bf 42} (1936), 689--692.

\bibitem{Li-2013} P.~Li, S.~Ponnusamy and X.~Wang,
Some properties of planar $p$--harmonic and $\log$-$p$--harmonic mappings,
\textit{Bull. Malays. Math. Sci. Soc.} \textbf{36}(3) (2013), 595--609.

\bibitem{Loe-1953} C. Loewner, On generation of solutions of the biharmonic equation in the plane by conformal mappings,
\textit{Pacific J. Math. } \textbf{3} (1953),  417--436.

\bibitem{Mocanu-2011} P. T. Mocanu,
\textrm{Injective conditions in the complex plane},
\textit{Complex Anal. Oper. Theory}, \textbf{5} (2011), 759--786.

\bibitem{PetPot-1998} M. Peternell and H. Pottmann,
A Laguerre geometric approach to rational offsets,
\textit{Comput. Aided Geom. Design}, \textbf{15}(3) (1998),  223--249.

\bibitem{SaRa2013}  S. Ponnusamy and A. Rasila,
\textit{Planar harmonic and quasiregular mappings},
Topics in Modern  Function Theory (Editors. St. Ruscheweyh and S. Ponnusamy): Chapter in
CMFT, RMS-Lecture Notes Series \textbf{19} (2013), 267--333.

\bibitem{samy-sairma-pre13} S.~Ponnusamy and  A.~Sairam Kaliraj,
\textrm{Constants and characterization for certain classes of univalent harmonic mappings},
\textit{Mediterr. J. Math.}  \textbf{12}(3) (2015), 647--665.

\bibitem{PotGroMit-2009} H. Pottmann, P. Grohs and N. J. Mitra,
Laguerre minimal surfaces, isotropic geometry and linear elasticity,
\textit{Adv. Comput. Math.}, \textbf{31}(4) (2009) 391--419.
\end{thebibliography}
\end{document}